\documentclass[a4paper,11pt]{amsart}
\usepackage[foot]{amsaddr}
\usepackage{geometry}
\usepackage{amsfonts}
\usepackage{amsthm}
\usepackage{amssymb}
\usepackage{amsmath}
\usepackage{comment}
\usepackage{theoremref}
\usepackage{enumerate}
\usepackage{mathtools}
\usepackage{a4wide}
\usepackage{xcolor}
\usepackage[hidelinks]{hyperref}
\usepackage{graphicx}
\makeatletter
\newcommand{\proofpart}[2]{%
    \par
  \addvspace{\medskipamount}%
  \noindent\emph{Step #1: #2}\par\nobreak
  \addvspace{\smallskipamount}%
  \@afterheading
}
\makeatother

\DeclarePairedDelimiter\abs{\lvert}{\rvert}%
\DeclarePairedDelimiter\norm{\lVert}{\rVert}%

\makeatletter
\let\oldabs\abs
\def\abs{\@ifstar{\oldabs}{\oldabs*}}
\let\oldnorm\norm
\def\norm{\@ifstar{\oldnorm}{\oldnorm*}}
\makeatother

\makeatletter
\g@addto@macro\bfseries{\boldmath}
\makeatother

\newcommand{\A}{\mathcal{A}}

\newcommand{\T}{\mathbb{T}}

\newcommand{\conj}[1]{\overline{#1}}
\newcommand{\D}{\mathbb{D}}
\newcommand{\B}{\mathcal{B}}

\newcommand{\cD}{\conj{\mathbb{D}}}

\renewcommand\Re{\operatorname{Re}}

\newtheorem{thm}{Theorem}[section]
\newtheorem{lemma}[thm]{Lemma}

\newtheorem{cor}[thm]{Corollary}
\newtheorem{prop}[thm]{Proposition}

\theoremstyle{definition}

\theoremstyle{definition}

\newtheorem{remark}[thm]{Remark}

\begin{document}
\title{Invertible non-cyclic functions in the little Bloch space}

\author{ Adem Limani}
\thanks{The author acknowledges financial support from the Swedish Research Council (VR), Starting Grant \mbox{2025-05093}.} 
\address{Centre for Mathematical Sciences, Lund University, Sweden}
\email{adem.limani@math.lu.se}

\date{\today}

\begin{abstract}
This paper presents a method for exhibiting functions $F$ in the little Bloch space $\B_0$ which are invertible, in the sense that $1/F$ also belongs to $\B_0$, and yet fail to be cyclic for the forward shift. The functions obtained in this way give rise to shift-invariant subspaces in $\B_0$ of index two, of a special type. The approach rests on a harmonic-measure criterion for non-cyclicity. This criterion gives a new manifestation of a principle of Borichev and Hedenmalm, and applies in a broad class of spaces of analytic functions.
\end{abstract}
\maketitle

\section{Introduction}
\subsection{Background on invertibility and cyclicity}

Let $X$ be a Fr\'echet space of analytic functions on
the unit disc $\D$, in which analytic polynomials are
dense and the multiplication operator
\[
 M_zf(z)=zf(z)
\]
acts continuously on $X$. A function $f\in X$ is said to be \emph{invertible} in $X$ if $1/f\in X$, 
while a function $f \in X$ is said to be \emph{cyclic} (with respect to $M_z$) if the linear span of 
\[
\left\{f, M_z f, M^2_z f, \dots \right\}
\]
forms a dense subspace of $X$. Under the assumption that the analytic polynomials are dense in $X$, cyclicity is equivalent to the existence of analytic polynomials $Q_n$ such that
\[
 Q_n f \to 1 \quad  \text{ in} \quad X.
\]
Cyclicity can therefore be viewed as an approximate form of invertibility, traditionally referred to as \emph{weak invertibility}.

In the classical theoretical frameworks of commutative Banach algebras $X$ with units, equipped with pointwise multiplication, the notions of invertibility and cyclicity coincide. Indeed, invertibility
implies cyclicity by polynomial approximation of $1/f$, and conversely, if $p_nf\to1$ in $X$, then by openness property of resolvents, $p_nf$ is invertible for sufficiently large $n$, and hence so is $f$.

The relationship between invertibility and cyclicity takes a different turn when one only slightly moves away from the framework of Banach algebras. To clarify this point, consider the Hardy spaces $H^p$ of analytic functions $f$ in $\D$ satisfying
\[
 \norm{f}_{H^p}^p
 :=\sup_{0<r<1}\int_\T |f(r\zeta)|^p\,dm(\zeta)<\infty,
 \qquad 1\le p<\infty,
\]
where $dm$ is normalized arc-length measure on $\T$.
A classical theorem of Beurling implies that the cyclic functions in $H^p$ are precisely the outer functions. Therefore, one can easily show that if both $f$ and $1/f$ belong to $H^p$, then $f$ is outer, and hence cyclic. However, the converse is not true, as 
\[
f(z)=1-z
\]
is a bounded outer function, whereas $1/f\notin \bigcup_{p\geq 1} H^p$. Therefore, the concept of invertibility becomes strictly stronger than cyclicity in the setting of Hardy spaces. The implication from invertibility to cyclicity also holds
in the framework of Korenblum growth space
\[
 \A^{-\infty}
 :=\bigcup_{\alpha>0}
 \left\{f\in\operatorname{Hol}(\D):
 \sup_{z\in\D}(1-|z|)^\alpha|f(z)|<\infty\right\},
\]
equipped with its natural inductive limit topology.
Since $\A^{-\infty}$ is an algebra with dense polynomials, polynomial approximation of $1/f$ again implies cyclicity, whereas the converse fails. For instance, see \cite{KorenblumCyc}. The more delicate question is whether invertibility implies cyclicity when looking at a fixed growth space. To this end, we define the growth space of order $\alpha>0$ as the Banach space of analytic functions $f$ in $\D$ satisfying
\[
 \A_0^{-\alpha}
 :=\left\{f\in\operatorname{Hol}(\D):
 (1-|z|)^\alpha|f(z)|\to0
 \text{ as }|z|\to1\right\},\qquad \alpha>0,
\]
equipped with the norm
\[
 \norm{f}_{\A^{-\alpha}}
 :=\sup_{z\in\D}(1-|z|)^\alpha|f(z)|.
\]

The invertibility problem in these spaces is closely related to the corresponding problem in the Bergman spaces $A^p$,
defined by
\[
 \norm{f}_{A^p}^p
 :=\int_\D |f(z)|^p\,dA(z)<\infty,
 \qquad 1\le p<\infty,
\]
where $dA$ denotes the Lebesgue area measure. In both settings, the question whether invertibility implies cyclicity was a longstanding problem. Influential results of Brown and Korenblum \cite{KorenblumBrown} proved that if we impose stronger initial conditions on $f$, then invertibility implies cyclicity. More specifically, for every sufficiently small $\varepsilon>0$, we have
\[
 \begin{aligned}
 f\in\A_0^{-(\alpha-\varepsilon)},\quad 1/f\in\A_0^{-\alpha}
 &\quad\Longrightarrow\quad
 f\text{ is cyclic in }\A_0^{-\alpha},\\
 f\in A^{p+\varepsilon},\quad 1/f\in A^p
 &\quad\Longrightarrow\quad
 f\text{ is cyclic in }A^p.
 \end{aligned}
\]
See \cite[Chapter~7]{HKZ2000} for detailed treatments of similar problems. The question was whether one can simply take $\varepsilon=0$ in the above statements. Borichev and Hedenmalm
\cite{borichev1995cyclicity,borichevhedenmalmcyclicity}
showed that one cannot, by constructing invertible
non-cyclic functions in both $\A_0^{-\alpha}$ and in $A^p$.
Borichev, Hedenmalm, and Volberg
\cite{borichev2004large} subsequently obtained analogous
examples in a broad class of weighted Bergman spaces,
and used them to construct invariant subspaces of index
greater than one.

\subsection{Main results in the little Bloch space}
Our main intention is to consider the relationship between invertibility and cyclicity in a framework which is substantially tighter than that of Bergman spaces.
Let $\B$ denote the Banach space of analytic functions $f$ in $\D$ equipped with the norm
\[
 \norm{f}_{\B}=\abs{f(0)}+
 \sup_{z\in\D}(1-\abs{z})\abs{f'(z)}< \infty.
\]
As $\B$ is not separable (with respect to the norm topology), we consider the closure of the analytic polynomials in $\B$, which gives rise to the little Bloch space $\B_0$ consisting of analytic functions $f$ in $\D$ with 
\[
\lim_{|z|\to 1^-} (1-|z|)f'(z)=0.
\]
The Bloch space may be regarded as a limiting case of the Bergman spaces $A^p$ as $p\to \infty$, in the same way as $\mathrm{BMOA}$ is the limiting case of the Hardy spaces $H^p$. Meanwhile, it may also be regarded as a limiting case of the algebras $\Lambda_a^\alpha$ of analytic functions in $\D$ with $\alpha$-H\"older continuous extensions to $\cD$. Indeed, by a classical theorem of Hardy and Littlewood (see, for instance, \cite{DurenHardy}), the functions in $\Lambda_a^\alpha$ can be characterized by the properties:
\[
|f'(z)| = O\left( (1-|z|)^{\alpha-1} \right), \qquad |z| \to 1^-,
\]
hence the Bloch space may also be regarded as a limiting case as $\alpha \to 0^+$.

It was recently proved in \cite{LimaniNicolau} that there exist functions $f \in \B$ with $1/f \in \B$, yet $f$ is not cyclic in $\B$, now equipped with the weak-star topology. This addressed a problem that remained open in \cite{anderson1991inner} since the early 1990s. The construction of such functions was based on special families of universal covering mappings, which are well-known to never belong to $\B_0$, and could therefore not be used in the setting of $\B_0$. Difficulties of a similar kind, highlighting the discrepancy between establishing weak-star cyclicity in the growth space $A^p$ and in the little-o growth space $\A^{-\alpha}_0$ were also mentioned and clarified in the work of Borichev and Hedenmalm in \cite{borichevhedenmalmcyclicity}.

The main purpose of this paper is to present a methodology which yields a stronger result in $\B_0$, which in addition yields the existence of very particular $M_z$-invariant subspaces in $\B_0$ of multiple index. To this end, let $[f_1,\ldots,f_n]$ denote the closed shift-invariant subspace
generated by $f_1,\ldots,f_n\in\B_0$, and recall that its index is defined as the number
\[
 \operatorname{ind}\mathcal M=\dim \mathcal M/z\mathcal M.
\]
The main result goes as follows.

\begin{thm}\thlabel{main:THM} There exists an analytic function $F$ in $\D$ which satisfies:
\begin{enumerate}
    \item[(i)] $F$ and $1/F$ belong to $\B_0$,
    \item[(ii)] $[F,F^{-1}]$ has index $2$.
\end{enumerate}
\end{thm}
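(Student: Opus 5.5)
We take $F=e^{h}$ with $h$ analytic in $\D$. Then $F^{\pm1}=e^{\pm h}$, and $(F^{\pm1})'=\pm h'e^{\pm h}$. So (i) will follow once we have two things: $h\in\B_0$, and $\Re h$ bounded above and below on the part of $\D$ where $(1-|z|)|h'|$ is not small. The cleanest way is to make $|\Re h|\le C$ everywhere. Then $e^{\pm h}\in H^\infty$, and $(1-|z|)|(e^{\pm h})'|\le e^{C}(1-|z|)|h'(z)|\to0$, so $F^{\pm1}\in\B_0$. We therefore want $h=u+i\tilde u$ with $u$ bounded harmonic and $h\in\B_0$.

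Such $F$ is invertible in $H^\infty$ and outer, so it is not automatically non-cyclic. Non-cyclicity has to come from the $\B_0$ topology, in the spirit of Borichev--Hedenmalm. The mechanism should be the harmonic-measure criterion announced in the abstract. Its expected form is as follows. Suppose there is a domain $\Omega\subset\D$ whose boundary touches $\T$ along a set where $\partial\Omega\cap\D$ is "thin", so that on $\Omega$ the function $F$ is very small: $\Re h\to-\infty$ in a controlled way is not allowed, so instead we let $\Im h$ oscillate. The precise plan is different. On $\partial\Omega\cap\D$ we want $|F|$ bounded, while the harmonic measure of $\partial\Omega\cap\T$ relative to $\Omega$ is small in the sense needed. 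Then the estimate $|Q F|\le \|QF\|_{\B}\log\frac{e}{1-|z|}$ (Bloch growth), combined with a two-constants / harmonic-measure argument, yields a bounded linear functional. For example, point evaluation of a suitable derivative, or a Cauchy integral along a curve $\Gamma\subset\Omega$ ending at $\T$. This functional annihilates $[F]$ but is nonzero on $1$. For index $2$ we need the analogous statement simultaneously for $F$ and $F^{-1}$. So we use two disjoint "channels" $\Omega_1,\Omega_2$ approaching $\T$. In $\Omega_1$, $F$ should be super-exponentially small relative to the Bloch growth; in $\Omega_2$, $F^{-1}$ should be.

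Concretely, bounded $\Re h$ cannot give smallness, so we relax it. Take $u=\Re h$ bounded above and below by $\pm C$ only off the channels. Inside $\Omega_1$ let $u\to-\infty$, and inside $\Omega_2$ let $u\to+\infty$, slowly enough that $h\in\B_0$. A Bloch function can grow like $o(\log\frac1{1-|z|})$, so $|F|=e^{u}$ behaves like $(1-|z|)^{\pm o(1)}$. Then $F^{\pm1}\in\B_0$ holds as long as the product $(1-|z|)|h'|e^{\pm u}\to0$, which is arranged by making $h'$ tiny in the channels, where $u$ is nearly harmonic along long thin regions. The key steps, in order, are:
\begin{enumerate}
\item Construct $u$, e.g.\ $u=P[\mu]$ for a signed singular-type boundary datum placed on two disjoint Cantor-type sets $E_1,E_2\subset\T$, mollified, so that $h\in\B_0$.
\item Verify (i) by the derivative estimate above.
\item Apply the harmonic-measure criterion in $\Omega_j$ to get functionals $L_1,L_2$. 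Here $L_1$ annihilates $[F]$ and $L_2$ annihilates $[F^{-1}]$, and $L_1(F^{-1})\neq0$, $L_2(F)\neq0$.
\item Deduce the index. The functionals show that neither of $F,F^{-1}$ lies in the invariant subspace generated by the other, and that $[F]\cap[F^{-1}]$ sits in the annihilator of both. A standard codimension argument then gives $\dim[F,F^{-1}]/z[F,F^{-1}]=2$. Here one uses that $z[F,F^{-1}]$ has codimension at most $2$, and that a nontrivial combination $aF+bF^{-1}\in z[F,F^{-1}]$ is excluded by evaluating at $0$ together with $L_1,L_2$.
\end{enumerate}

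The main obstacle is step 3: the smallness in each channel must beat the $\log\frac1{1-|z|}$ growth of Bloch functions via harmonic measure. This is exactly where $\varepsilon=0$ fails in Brown--Korenblum. At the same time we must keep $e^{\pm h}$ in $\B_0$, which forces the decay to be only subpolynomial. We would first try channels shaped like Stolz-type horn regions with width $\delta(r)$ tuned so that the harmonic measure of the tip decays faster than any power of the Bloch growth, and check this balance explicitly.
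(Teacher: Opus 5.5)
Your proposal has a genuine gap. Neither the function nor the non-cyclicity mechanism is actually produced, and the mechanism you sketch cannot work as described.

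\emph{Step 1 is not a construction.} With a mollified datum, $u$ is bounded. Then $F$ is in fact cyclic in $\B_0$: when $F^{\pm1}\in H^\infty\cap\B_0$, one has $F/F(r\,\cdot)\in[F]$ and $F/F(r\,\cdot)\to1$ in $\B_0$. With a non-mollified datum, nothing in your plan controls $(1-|z|)|h'|e^{\pm u}$.

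\emph{Step 3 cannot work as stated.} Since $F^{\pm1}\in\B_0$, you have $|F(z)|^{\pm1}=o(\log\frac e{1-|z|})$. So $F$ can never be ``super-exponentially small relative to the Bloch growth''. Moreover, the pointwise bound $|Q|\le\|QF\|_\B\log\frac e{1-|z|}/|F|$ tends to infinity as $|z|\to1$, so it gives no uniform control anywhere. Smallness of $F$ in $\Omega_1$ is useless until the polynomial factors are controlled there. That control can only come from where $F$ is large, and it must be transported inwards by subharmonicity. This requires the large set to be unavoidable for Brownian motion. The paper's form of this is that the large set is visible in harmonic measure from $r_{j-1}\D$, with a uniform lower bound $\gamma$ at every scale. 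Thin horn-shaped channels, whose harmonic measure seen from the interior tends to zero near $\T$, are the opposite geometry.

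\emph{What the paper uses instead.} First, the square-mean bound $\int_\T|g(r\zeta)|^2\,dm\lesssim\|g\|_\B^2\log\frac e{1-r}$ is played against $|F(r_j\zeta)|\ge L_j\gtrsim\varepsilon_j\sqrt{\log\frac e{1-r_j}}$ on such visible sets. The resulting loss $\varepsilon_j^{-1}$ is absorbed by the recursion $\alpha_{j-1}\le\alpha_j^{1-\gamma}+\tfrac12\alpha_j$. Second, a construction achieves this near-maximal size. One takes $F=e^f$ with $f$ odd, so that $F^{-1}=F(-\,\cdot)$ and the large set of $F^{-1}$ is the reflection of that of $F$. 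The function $f$ is built from damped lacunary blocks, so that $e^f$ behaves like a martingale with increments of size $\varepsilon_k$. Visibility is then extracted from second and fourth moment bounds via Paley--Zygmund. Your plan has no substitute for either ingredient.

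\emph{Step 4 fails even if Step 3 is granted.} A functional $L_1$ annihilating $[F]$ need not vanish on $z[F^{-1}]\subset z[F,F^{-1}]$. So applying it to $aF+bF^{-1}\in z[F,F^{-1}]$ yields no information. In fact, everything you list is satisfied in $H^2$ by two relatively prime inner functions $\theta_1,\theta_2$ with $\theta_i(0)\neq0$. Each lies outside the closed invariant subspace generated by the other, and $[\theta_1]\cap[\theta_2]$ is annihilated by both functionals. Yet $[\theta_1,\theta_2]=H^2$ has index one. What is needed are functionals on $[F,F^{-1}]$ that vanish on $z[F,F^{-1}]$ and separate $F$ from $F^{-1}$. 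This amounts to the joint estimate $|p(0)|+|q(0)|\lesssim\|pF+qF^{-1}\|_\B$ for all polynomials $p,q$, as in the Borichev--Richter lemma. The two separate statements $F^{-1}\notin[F]$ and $F\notin[F^{-1}]$ do not imply it.
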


We emphasize that the principal point of Theorem~\ref{main:THM} is not to exhibit $M_z$-invariant subspaces of large index, as this has previously been done in the recent work of Biehler \cite{BIEHLER2025111034}. The striking point is that Theorem~\ref{main:THM} gives rise to $M_z$-invariant subspaces of a very specific kind, which lend themselves to establishing the following remarkable consequence.

\begin{cor} \thlabel{COR:CYCINV} There exists an invertible function $F$ in $\B_0$ which is not cyclic in $\B_0$.
\end{cor}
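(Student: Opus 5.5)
We deduce the corollary directly from Theorem~\ref{main:THM}, taking the function $F$ given there. By part (i), $F$ and $1/F$ both lie in $\B_0$, so $F$ is invertible in $\B_0$. It remains to show that $F$ is not cyclic, and the plan is to argue by contradiction using the index statement in part (ii).

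Suppose $F$ were cyclic, so that $[F]=\B_0$. Since $[F]\subseteq[F,F^{-1}]\subseteq\B_0$, this forces $[F,F^{-1}]=\B_0$. We then compute the index of $\B_0$ itself. First, $z\B_0$ is closed in $\B_0$ and equals $\{f\in\B_0: f(0)=0\}$. Indeed, if $f\in\B_0$ and $f(0)=0$, then $g=f/z$ is analytic. Moreover, $(1-|z|)|g'(z)|\to0$ as $|z|\to1$, because near the boundary $g'=f'/z-f/z^2$ and $f(z)=o(\log\frac{1}{1-|z|})$. Hence $g\in\B_0$. Conversely, $M_z$ maps $\B_0$ into $\B_0$, as is assumed in the setup. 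So $z\B_0$ is the kernel of the nonzero continuous functional $f\mapsto f(0)$, and therefore
\[
\operatorname{ind}\B_0=\dim \B_0/z\B_0=1 .
\]
This contradicts $\operatorname{ind}[F,F^{-1}]=2$. Hence $F$ is not cyclic.

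A subtle point is that the index in the paper is $\dim\mathcal M/z\mathcal M$, with $z\mathcal M$ possibly replaced by its closure. Either convention works here, since $z\B_0$ is already closed. The only genuine step is the division property $f(0)=0\Rightarrow f/z\in\B_0$, which is routine. The real work lies in Theorem~\ref{main:THM}, which we are allowed to assume.

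The same argument shows that $F^{-1}$ is not cyclic either. If $F^{-1}$ were cyclic, the same containment would give $[F,F^{-1}]=\B_0$, which again has index $1$.
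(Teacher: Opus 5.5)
Your proposal is correct and is the paper's argument: the paper also deduces the corollary from Theorem~\ref{main:THM} by noting that cyclicity of $F$ (or of $F^{-1}$) would force $[F,F^{-1}]=\B_0$, contradicting index two. You additionally supply the routine verification, left implicit in the paper, that $z\B_0=\{f\in\B_0: f(0)=0\}$ and hence $\operatorname{ind}\B_0=1$.
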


Indeed, neither $F$ nor $F^{-1}$ in the statement of Theorem~\ref{main:THM} can be cyclic in $\B_0$, since otherwise $[F,F^{-1}]=\B_0$.

We remark that the polynomial (outer) function $f(z)=1-z$ is easily seen to be cyclic in $\B_0$, but $1/f \notin \B_0$. Therefore the relationship between invertibility and cyclicity is lost in the framework of $\B_0$, sharing a similar trait to the settings of Bergman spaces and fixed growth spaces.

\paragraph{\textbf{Generalizations to weighted Bloch spaces.}}
Let $w:(0,1]\to(0,1]$ be nondecreasing, with $w(t)/t^{1-\delta}$ nonincreasing for some $\delta>0$, and let $\B_0(w)$ denote the space of analytic functions $g$ in $\D$ with
\[
 \lim_{|z|\to1^-}\frac{(1-|z|)\,|g'(z)|}{w(1-|z|)}=0.
\]
These spaces are contained in $\B_0$, and the square means of their elements grow at most like
\[
 \Phi_w(r)=1+\int_{1-r}^1w(t)^2\,\frac{dt}{t}.
\]
The methods of this paper carry over to $\B_0(w)$ whenever $\Phi_w(r)\to\infty$ as $r\to1^-$, and yield an invertible function $F\in\B_0(w)$ with $\operatorname{ind}[F,F^{-1}]=2$. In particular, $F$ is not cyclic in $\B_0(w)$. For instance, this applies to the weights $w(t)=\log^{-s}(e/t)$ with $0\le s\le1/2$. The necessary modifications are described in Remark~\ref{REM:weighted}, after the proof of Theorem~\ref{main:THM}.

The divergence of $\Phi_w$ is necessary for this approach, since $\B_0(w)\subset H^2$ when $\int_0^1 w(t)^2\,dt/t<\infty$, and the large values required by the criterion then cannot occur. We believe that the threshold is sharp, so that every invertible function in $\B_0(w)$ is then cyclic. This is elementary when $\int_0^1 w(t)\,dt/t<\infty$, since $\B_0(w)$ is then a Banach algebra, but the intermediate range appears to be considerably more subtle, and will be treated elsewhere.

\subsection{Outline of the proof}
The proof of Theorem~\ref{main:THM} is constructive, and differs from the approach in \cite{LimaniNicolau}, which produced an invertible function in $\B$ that is not weak-star cyclic. We look for $F$ of the form
\[
 F(z)=e^{f(z)},\qquad z\in\D,
\]
where $f$ is odd. Then $F(-z)=F(z)^{-1}$, so membership of $F$ in $\B_0$ automatically gives membership of $1/F$. The task is to keep $F$ in $\B_0$, while making it large on sets that obstruct cyclicity. The proof has two parts.
\begin{enumerate}
 \item[(i)] \emph{A criterion.} In Section~2, a new criterion for non-cyclicity is established, which asks that the large values of $F$ on suitable circles occupy sets of uniformly positive harmonic measure at every scale. It is guided by the Borichev--Hedenmalm principle: \emph{A function which has near-maximal growth on a sufficiently large set cannot be cyclic}, but rests on a different mechanism. Combined with the symmetry between $F$ and $F^{-1}$, it yields $\operatorname{ind}[F,F^{-1}]=2$, see Theorem~\ref{THM:SUFFNONCYCLIC}. It turns out that the criterion admits a far-reaching generalization, which gives a new and stronger manifestation of the aforementioned principle in a broad class of spaces, detecting index as well as non-cyclicity, see Section~\ref{SEC:general}.
 \item[(ii)] \emph{A construction.} In Sections~3 and~4, $f$ is built from long blocks of odd polynomial terms at widely separated frequencies, damped so that their effect on $e^{\pm f}$ remains small in Bloch norm. Decaying amplitudes keep $F$ in $\B_0$, and the length of the blocks produces the large values. In Section~\ref{SEC:GUIDE}, the construction is outlined in more detail.
\end{enumerate}
The two parts are linked by the Paley--Zygmund inequality, see Lemma~\ref{paley-zygmund}. Bounds on the second and fourth moments of $F$ on a circle force its large values to occupy a set of uniformly positive harmonic measure, without prescribing the geometry of that set. A probabilistic interpretation of the criterion is given in Remark~\ref{REM:brownian}. 

In Makarov's probabilistic picture of Bloch functions \cite{makarov1985,pommerenke1992}, $F$ behaves like a martingale over dyadic scales, and the moment bounds amount to a central limit theorem, with variance nearly as large as membership in $\B_0$ allows. In $\B_0(w)$ this variance is at most $\Phi_w(r)$, which is why the method requires $\Phi_w(r)\to\infty$.

\medskip
\paragraph{\textbf{Notation.}}
We write $A\lesssim B$ if $A\le cB$ for some constant $c>0$. Constants may change from line to line, and are absolute unless otherwise indicated.

\subsection{Acknowledgments and declarations}
The author is grateful to Artur Nicolau for fruitful discussions.

The research program leading to Corollary~\ref{COR:CYCINV} began in summer 2024, with the aim of constructing a non-cyclic function in $\B_0$ of the form
\[
 F(z)=\exp\Bigl(\sum_n a_nz^{\lambda_n}\Bigr),\qquad z\in\D,
\]
with odd frequencies $\lambda_n$ and Hadamard gaps, so that $F(-z)=F(z)^{-1}$. The difficulty was that, due to the lacunarity of the frequencies, functions in $\B_0$ of this form are too well behaved to have near-maximal growth on large sets. This motivated replacing individual monomials by blocks of odd polynomials, supported on lacunary and widely separated parts of the Fourier spectrum, so that different blocks barely interact.

ChatGPT Astra 6.0 assisted in developing a harmonic-measure criterion suited to this construction, and devised a formal proof. Claude Opus 5.5 assisted with the final revision of the exposition, including Remark~\ref{REM:brownian} and Figure~\ref{FIG:visibility}. The author formulated the research program and developed the exposition through successive revisions, with particular attention to the parameter choices and the mechanisms behind the construction. Any mistakes are the author's responsibility.

\section{A sufficient condition}

\subsection{A harmonic-measure criterion as an obstruction to cyclicity}

Here we present a condition on the size of $F$ that prevents cyclicity in $\B_0$. In fact, the criterion presented below gives more, since under the symmetry assumption
\[
F(-z)=\frac{1}{F(z)} \qquad z\in \D
\]
it further ensures that the invariant subspace generated by $F$ and $F^{-1}$ has index two. Recall that the harmonic measure of a Borel set
$E\subseteq\T$, viewed from $z\in\D$, is given by
\[
 \omega(z,\D,E)=\int_E P_z(\zeta)\,dm(\zeta),
\]
where $P_z$ denotes the Poisson kernel in $\D$. The criterion requires the large values of $F$ on selected circles to occupy sets of uniformly positive harmonic measure, as viewed from the preceding discs.
It is motivated by the use of growth as an obstruction to cyclicity in the work of Borichev and Hedenmalm \cite{borichevhedenmalmcyclicity}.

\begin{thm}[A harmonic-measure criterion]\label{THM:SUFFNONCYCLIC}
Let $F\in\B_0$ be zero-free and satisfy
\[
 F(-z)=F(z)^{-1},\qquad z\in\D.
\]
Let $\{r_j\}_{j\ge0}\subset(0,1)$ be strictly increasing to $1$,
and let $L_j>0$ satisfy
\[
 L_j\longrightarrow\infty,
 \qquad \varepsilon_j\sqrt{\log\frac e{1-r_j}}\lesssim L_j,
\]
for some numbers $\varepsilon_j>0$ with $\varepsilon_j\to0$ and $\varepsilon_{j-1}\lesssim\varepsilon_j$. If there exists $0<\gamma<1$ such that, for all sufficiently large $j$,
\begin{equation}\label{visible}
 \omega\left(\frac z{r_j},\D,
 \{\zeta\in\T:|F(r_j\zeta)|\ge L_j\}\right)\ge\gamma,
 \qquad |z|\le r_{j-1},
\end{equation}
then
\[
 \operatorname{ind}[F,F^{-1}]=2.
\]
In particular, neither $F$ nor $F^{-1}$ is cyclic in $\B_0$.
\end{thm}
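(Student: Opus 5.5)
The plan is to reduce the index statement to an \emph{a priori} estimate on the generators. Since $F$ is zero-free and $F(0)=F(0)^{-1}$, we have $F(0)=\pm1$. Every element of the linear span $\{pF+qF^{-1}: p,q \text{ polynomials}\}$, which is dense in $\mathcal M=[F,F^{-1}]$, can be written in this form. I would show that the map
\[
\Lambda: pF+qF^{-1}\longmapsto (p(0),q(0))\in\mathbb C^2
\]
is well defined and bounded in $\B$-norm on this span. Granting this, $\Lambda$ extends continuously to $\mathcal M$. It vanishes on $z\mathcal M$ and is onto, since $\Lambda F=(1,0)$ and $\Lambda F^{-1}=(0,1)$. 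Therefore $\dim \mathcal M/z\mathcal M\ge 2$. The reverse inequality $\le2$ is automatic, because $\mathcal M/z\mathcal M$ is spanned by the classes of $F$ and $F^{-1}$. The substitution $z\mapsto -z$ is an isometry of $\B_0$ which, by $F(-z)=F(z)^{-1}$, sends $pF+qF^{-1}$ to $\tilde q F+\tilde p F^{-1}$ with $\tilde p(z)=p(-z)$. Hence it suffices to prove the one-sided estimate $|p(0)|\lesssim \norm{pF+qF^{-1}}_{\B}$, with a constant that is uniform but allowed to depend on $F$.

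To obtain this estimate I would use the visibility condition \eqref{visible} together with subharmonicity. Put $h=pF+qF^{-1}$ and $E_j=\{\zeta:|F(r_j\zeta)|\ge L_j\}$. On $r_jE_j$ we have $|F^{-1}|\le 1/L_j$, and therefore
\[
|p|\le \frac{|h|}{L_j}+\frac{|q|}{L_j^2}.
\]
The symmetric statement holds on $-r_jE_j$, with the roles of $p$ and $q$ interchanged. Let $A_j=\max_{|z|\le r_j}(|p|+|q|)$. Since $\log|p|$ is subharmonic, comparing it with the Poisson integral over the circle of radius $r_j$ and using \eqref{visible} gives, for $|z|\le r_{j-1}$,
\[
\log|p(z)|\le \gamma\,\sup_{\zeta\in E_j}\log\Bigl(\frac{|h(r_j\zeta)|}{L_j}+\frac{A_j}{L_j^2}\Bigr)+(1-\gamma)\log A_j .
\]
Here I would replace the supremum of $|h|$ on $E_j$ by a mean of $|h|$: the Poisson kernel is bounded above and below for $|z|\le r_{j-1}$ relative to $r_j$, and Jensen's inequality allows averaging. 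The Bloch condition gives square means $\bigl(\int|h(r\zeta)|^2dm\bigr)^{1/2}\lesssim \norm{h}_\B\sqrt{\log\frac{e}{1-r}}$, which the hypothesis $\varepsilon_j\sqrt{\log\frac e{1-r_j}}\lesssim L_j$ converts into a bound of order $\norm{h}_\B/\varepsilon_j$. The resulting recursion has the form
\[
A_{j-1}\le A_j^{1-\gamma}\Bigl(C\frac{\norm h_\B}{\varepsilon_j L_j}+\frac{A_j}{L_j^2}\Bigr)^{\gamma}.
\]

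The key steps, in order, are as follows.
\begin{enumerate}
\item[(a)] Record the reduction to the one-sided bound on $|p(0)|$ via the symmetry.
\item[(b)] Prove the subharmonic estimate above, using a Harnack comparison between the Poisson kernel at $z/r_j$ and $dm$.
\item[(c)] Combine the two symmetric inequalities, for $p$ and for $q$, into a recursion for $A_j$.
\item[(d)] Iterate the recursion downward from a large index $J$ to a fixed index $j_0$.
\end{enumerate}
In step (d), the polynomial bound $A_J\le C_{p,q}$ holds for the fixed polynomials $p,q$. As $J\to\infty$, the factor $L_j^{-2\gamma}\to0$ geometrically kills this initial size. The surviving quantity is governed by $\norm h_\B$, with weights $(\varepsilon_jL_j)^{-1}\lesssim 1$. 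The condition $\varepsilon_{j-1}\lesssim\varepsilon_j$ is what keeps the weights comparable from one scale to the next.

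The main obstacle is step (d): closing the iteration so that the dependence on the unknown polynomial size $A_J$ genuinely disappears and leaves a constant independent of $p$ and $q$. The iteration mixes $p$ and $q$, and the geometric factor $(1-\gamma)$ compounds. What I would try first is to work with $\log A_j$ and show that the map $x\mapsto(1-\gamma)x+\gamma\max\bigl(\log\frac{C\norm h}{\varepsilon_jL_j},\,x-2\log L_j\bigr)$ is a contraction toward $\log\norm h_\B+O(1)$; this works because $L_j\to\infty$. If the pure $\B$ square-mean bound is too weak to close the argument, I would instead use the little-$o$ behaviour available in $\B_0$. This requires care, because the estimate must be uniform in $h$ and must not depend on how close $h$ is to a particular element.
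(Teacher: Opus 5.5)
Your reduction to $|p(0)|+|q(0)|\lesssim\norm{pF+qF^{-1}}_\B$ is fine; it is the Borichev--Richter lemma the paper uses. Step (b), however, fails as stated. You measure $p,q$ by the maxima $A_j=\max_{|z|\le r_j}(|p|+|q|)$, so you must control $\int_\T|h(r_j\zeta)|\,P_{z/r_j}(\zeta)\,dm(\zeta)$ at \emph{every} point $|z|\le r_{j-1}$. You propose to pass from this to the $dm$-square mean by a Harnack comparison. But the Harnack constant for $|z/r_j|\le r_{j-1}/r_j$ is $H_j=\frac{1+r_{j-1}/r_j}{1-r_{j-1}/r_j}\gtrsim(1-r_{j-1})^{-1}$, which tends to infinity. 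Since the theorem imposes no condition on the radii, $H_j$ can grow arbitrarily fast. In your logarithmic iteration it enters through terms $\gamma(1-\gamma)^{j-j_0}\log H_j$, whose sum need not converge.

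This is not merely a lossy step. By subharmonicity, $\int_\T|h(r_j\zeta)|^2P_{z/r_j}(\zeta)\,dm(\zeta)\ge|h(z)|^2$. With only $\norm{h}_\B\le1$ at your disposal, $|h(z)|$ can be of size $\log\frac e{1-r_{j-1}}$ on $r_{j-1}\D$. The hypothesis $\varepsilon_j\sqrt{\log\frac e{1-r_j}}\lesssim L_j$ does not absorb this without an extra growth condition on $r_j$, which is why the paper's pointwise variant of the criterion needs additional assumptions on the radii.

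The missing idea is to work with square means $\mathcal I_j^2=\int_\T\bigl(|p(r_j\zeta)|^2+|q(r_j\zeta)|^2\bigr)\,dm(\zeta)$ instead of maxima. Let $a(z)$ and $b(z)$ be the $P_{z/r_j}\,dm$-averages of $|p(r_j\cdot)|^2$ and of $L_j^{-2}|h(r_j\cdot)|^2+L_j^{-4}|q(r_j\cdot)|^2$. Apply Jensen's inequality on $E_j$ and on its complement, then interpolate with $|p(z)|^2\le a(z)$ to pass from $\omega(E_j)$ to $\gamma$. This gives $|p(z)|^2\lesssim a(z)^{1-\gamma}b(z)^{\gamma}$. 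Then \emph{average over} $z=r_{j-1}\xi$, $\xi\in\T$. H\"older's inequality and $\int_\T P_{r_{j-1}\xi/r_j}(\zeta)\,dm(\xi)=1$ turn the harmonic-measure averages into exact $dm$-averages on $r_j\T$, with no Harnack constant. The result is $\mathcal I_{j-1}\lesssim\varepsilon_j^{-\gamma}\mathcal I_j^{1-\gamma}+L_j^{-2\gamma}\mathcal I_j$.

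Your recursion also contains a slip that matters for step (d). Dividing the square-mean bound $\norm{h}_\B\sqrt{\log\frac e{1-r_j}}$ by $L_j$ gives $\lesssim\norm{h}_\B/\varepsilon_j$, not $\norm{h}_\B/(\varepsilon_jL_j)$. In any case the hypotheses do not make $(\varepsilon_jL_j)^{-1}$ bounded. So the level toward which your iteration contracts is about $\log(\norm{h}_\B/\varepsilon_j)$, which drifts to infinity, not $\log\norm{h}_\B+O(1)$.

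The iteration still closes, and this is where $\varepsilon_{j-1}\lesssim\varepsilon_j$ is used. With $\alpha_j=\varepsilon_j^2\mathcal I_j$ one gets $\alpha_{j-1}\le C(\varepsilon_{j-1}/\varepsilon_j)^2\bigl(\varepsilon_j^{\gamma}\alpha_j^{1-\gamma}+L_j^{-2\gamma}\alpha_j\bigr)$, and both coefficients tend to zero. Hence $\alpha_{j-1}\le\alpha_j^{1-\gamma}+\tfrac12\alpha_j$ for $j>j_0$, with $j_0$ independent of $p,q$. Since $\sup_{j\ge j_0}\alpha_j$ is finite for polynomials, it is at most $2^{1/\gamma}$. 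Therefore $|p(0)|+|q(0)|\le2\mathcal I_{j_0}\lesssim\varepsilon_{j_0}^{-2}$ whenever $\norm{h}_\B\le1$. Your log-contraction can also be made to work, since the doubling condition keeps the excess over the moving fixed point bounded.

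A minor point: in your displayed subharmonic inequality, replacing $\omega(E_j)$ by $\gamma$ requires capping the bound on $E_j$ by $\log A_j$, for instance by taking a minimum.
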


\begin{remark}\label{REM:square}
For $F\in\B_0$, polynomial approximation and the square-mean
estimate \eqref{square} below give
\[
 \int_\T |F(r\zeta)|^2\,dm(\zeta)
 =o\!\left(\log\frac e{1-r}\right),\qquad r\to1^{-}.
\]
In order to obtain \eqref{visible}, we thus need $L_j$ to grow more
slowly than $\sqrt{\log(e/(1-r_j))}$, hence the role of $\varepsilon_j$, while still tending to infinity
to detect increasingly large values of $F$. This restriction is sharp, since along its radii $r_k$, the function $F$ constructed in Section~\ref{SEC:main} satisfies
\[
\int_\T|F(r_k\zeta)|^2\,dm(\zeta)\gtrsim\varepsilon_k^2\log\frac e{1-r_k},
 \qquad\varepsilon_k\to0,
\]
see \eqref{choice-scales} and \eqref{final-moments}. Its square means are thus as large as membership in $\B_0$ allows, up to the factor $\varepsilon_k^2$, while its large values occupy sets of uniformly positive harmonic measure. Since the amplitudes $\varepsilon_k$ in the construction may tend to zero arbitrarily slowly, this factor can be made as mild as one wishes. In this sense, $F$ realizes the Borichev--Hedenmalm principle within $\B_0$.
\end{remark}

\begin{remark}[A probabilistic interpretation]\label{REM:brownian}
Below we give a probabilistic interpretation of Theorem~\ref{THM:SUFFNONCYCLIC}, which goes back to Kakutani \cite{kakutani1944}. Since $\omega(z/r_j,\D,E)$ is the probability that a Brownian motion started at $z$ first leaves the disc $r_j\D$ through $r_jE$, condition \eqref{visible} says that a Brownian motion started in $r_{j-1}\D$ is stopped on $r_j\T$ in a \emph{cloud} with probability at least $\gamma$. Here a cloud is a part of $r_j\T$ where $|F|$ is at least of the order of its expected size, which is nearly maximal for functions in $\B_0$ by Remark~\ref{REM:square}. By symmetry, the clouds of $F^{-1}$ are the reflected ones.

Since this holds at every scale, a Brownian path from the origin avoids the clouds at $n$ consecutive scales with probability at most
\[
 (1-\gamma)^n\longrightarrow0,\qquad n\to\infty,
\]
so it is bound to be caught by a cloud sooner or later. This is the Borichev--Hedenmalm principle in $\B_0$, near-maximal growth on sets that Brownian motion cannot avoid, see Figure~\ref{FIG:visibility}. For an accessible introduction to the probabilistic perspective on harmonic measure, we refer to the book of {\O}ksendal \cite{oksendal2003}.
\end{remark}

\begin{figure}[!htbp]
\centering
\begin{minipage}[b]{0.49\textwidth}\centering
\includegraphics[width=\linewidth]{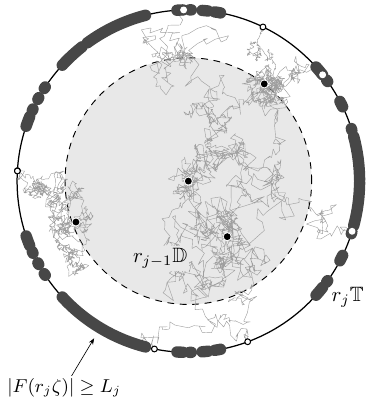}\\[2pt]
{\small (a) One scale}
\end{minipage}\hfill
\begin{minipage}[b]{0.46\textwidth}\centering
\includegraphics[width=\linewidth]{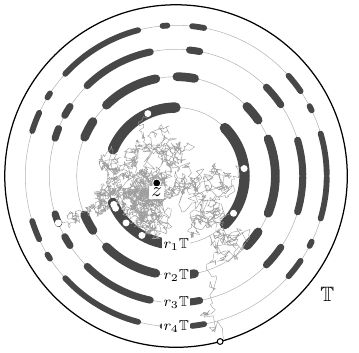}\\[2pt]
{\small (b) All scales}
\end{minipage}
\caption{Remark~\ref{REM:brownian} in pictures. (a) Brownian paths from points in $r_{j-1}\D$, stopped at $r_j\T$, and the dark "clouds" are where $|F|$ is large. (b) Each path is drawn until a cloud catches it. The clouds come from a simulated run of the construction.}\label{FIG:visibility}
\end{figure}

We shall use the following lemma, which we attribute to the works of Borichev \cite{borichev1998invariant}, and independently, Richter \cite{richter1987invariant}, which allows one to reformulate the matter of large index in terms of a norm estimate.

\begin{lemma}[Borichev--Richter lemma]\label{richter}
Let $f_1,\ldots,f_n\in\B_0$. Suppose that there exists $C>0$ such that
\begin{equation*}
 \sum_{j=1}^n|p_j(0)|
 \le C\norm{\sum_{j=1}^n p_jf_j}_\B
\end{equation*}
for all analytic polynomials $p_1,\ldots,p_n$. Then
\[
 \operatorname{ind}[f_1,\ldots,f_n]=n.
\]
\end{lemma}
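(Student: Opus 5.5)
The plan is to build $n$ bounded linear functionals on $\mathcal M=[f_1,\ldots,f_n]$ that vanish on $z\mathcal M$ and are dual to $f_1,\ldots,f_n$. This gives $\dim\mathcal M/z\mathcal M\ge n$. The reverse inequality comes from a density argument.

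\emph{Step 1: the functionals.} Let $\mathcal P$ be the set of finite combinations $g=\sum_j p_jf_j$ with $p_j$ analytic polynomials. By definition $\mathcal P$ is dense in $\mathcal M$. On $\mathcal P$ I define $\Lambda_k(g)=p_k(0)$. This is well defined: if $\sum_j p_jf_j=\sum_j q_jf_j$, then applying the hypothesis to the polynomials $p_j-q_j$ gives $\sum_j|p_j(0)-q_j(0)|\le C\cdot 0$. The hypothesis also gives $|\Lambda_k(g)|\le C\norm{g}_\B$. Hence each $\Lambda_k$ extends uniquely to a bounded functional on $\mathcal M$. Since $\Lambda_k(f_l)=\delta_{kl}$, the $f_l$ are linearly independent.

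\emph{Step 2: $z\mathcal M$ is closed and killed by the $\Lambda_k$.} First I check that $M_z$ is bounded below on $\B_0$, i.e.\ $\norm{zg}_\B\gtrsim\norm{g}_\B$.
\begin{itemize}
\item On $|z|\ge 1/2$, I use $g'=((zg)'-g)/z$.
\item The sup-norm of $g$ on $|z|\le 3/4$ is controlled by that of $zg$ on $|z|=3/4$, via the maximum principle.
\item Cauchy estimates and the standard bound $|h(z)|\lesssim \norm{h}_\B\log\frac{e}{1-|z|}$ then give the claim.
\end{itemize}
Consequently $z\mathcal M$ is closed. Moreover $z\mathcal M$ is the closure of $z\mathcal P$. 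Every element $\sum_j zp_jf_j$ of $z\mathcal P$ has all coefficient polynomials vanishing at $0$, so the $\Lambda_k$ annihilate $z\mathcal P$, and by continuity they annihilate $z\mathcal M$. Thus the $\Lambda_k$ descend to $\mathcal M/z\mathcal M$ and separate the classes of $f_1,\ldots,f_n$. This gives $\operatorname{ind}\mathcal M\ge n$.

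\emph{Step 3: upper bound.} Each $g=\sum_j p_jf_j\in\mathcal P$ can be written as
\[
g=\sum_j p_j(0)f_j+z\sum_j \tilde p_jf_j,
\]
where $\tilde p_j$ are polynomials. So $\mathcal P\subset \operatorname{span}\{f_j\}+z\mathcal M$. A finite-dimensional subspace plus a closed subspace is closed. This sum contains the dense set $\mathcal P$, hence equals $\mathcal M$. Therefore $\dim\mathcal M/z\mathcal M\le n$, and so $\operatorname{ind}\mathcal M=n$.

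The only step needing care is Step 2, namely that $M_z$ is bounded below, so that $z\mathcal M$ is closed and the quotient is meaningful. This is routine for $\B_0$.
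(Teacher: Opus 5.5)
Your proof is correct. It is the standard argument behind the result the paper cites; the paper gives no proof of its own and defers to Borichev's Lemma~3.1. In that argument the hypothesis makes $\sum_j p_jf_j\mapsto(p_1(0),\ldots,p_n(0))$ a well-defined bounded surjection of $\mathcal M=[f_1,\ldots,f_n]$ onto $\mathbb{C}^n$ that vanishes on $z\mathcal M$, and the fact that $\mathcal M$ is generated by $n$ elements gives the reverse inequality. Your check that $M_z$ is bounded below, so that $z\mathcal M$ is closed, is exactly the hypothesis the paper lists in Theorem~\ref{THM:general}(a) when it reuses the lemma in other spaces.
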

For a self-contained short proof, see \cite[Lemma~3.1]{borichev1998invariant}.
\begin{proof}[Proof of Theorem~\ref{THM:SUFFNONCYCLIC}]
According to the Lemma of Borichev and Richter, it suffices to show that
\begin{equation}\label{richter-estimate}
 |p(0)|+|q(0)|\lesssim\norm{pF+qF^{-1}}_\B
\end{equation}
for analytic polynomials $p,q$. The idea is to compare their square means on successive circles $r_j \T$, moving outwards to $\T$, and then propagate the resulting bound to the origin.

\proofpart{1}{The square-mean estimate:}
For $g\in\B_0$, we first establish
\begin{equation}\label{square}
 \int_\T |g(r\zeta)|^2\,dm(\zeta)
 \lesssim\norm g_\B^2\log\frac e{1-r},
 \qquad 0<r<1.
\end{equation}
Indeed, the Littlewood--Paley identity and the Bloch estimate give
\[
 \begin{aligned}
 \int_\T |g(r\zeta)|^2\,dm(\zeta)
 &\lesssim |g(0)|^2+
 \int_{|\lambda|<r}|g'(\lambda)|^2
              \log\frac r{|\lambda|}\,dA(\lambda)\\
 &\lesssim\norm g_\B^2\left(1+
       \int_0^r\frac{t\log(r/t)}{(1-t^2)^2}\,dt\right)\\
 &\lesssim\norm g_\B^2\log\frac e{1-r}.
 \end{aligned}
\]
Here $dA$ denotes Lebesgue area measure, and the last inequality follows by integrating by parts.

\proofpart{2}{A harmonic-measure estimate for polynomial factors:}
Fix two analytic polynomials $p,q$ with
\[
 g(z)=p(z)F(z)+q(z)F^{-1}(z),\qquad \text{and} \quad \norm g_\B\le1,
\]
and set
\[
 \mathcal{I}_j^2:=\int_\T\bigl(|p(r_j\zeta)|^2+|q(r_j\zeta)|^2\bigr)
                    \,dm(\zeta), \qquad j\geq 1.
\]
Fix $|z|\le r_{j-1}$, write
$d\omega=P_{z/r_j}\,dm$, and note that by assumption the set
\[
 E:=\{\zeta\in\T:|F(r_j\zeta)|\ge L_j\}
\]
satisfies $\omega(E)\ge\gamma$ by \eqref{visible}.
Now on $E$, the identity $p=g/F-q/F^2$ gives 
\[
 |p(r_j\zeta)|
 \le L_j^{-1}|g(r_j\zeta)|+L_j^{-2}|q(r_j\zeta)|, \qquad \zeta \in E.
\]
Subharmonicity and Jensen's inequality on $E$ and its complement give
\[
 |p(z)|^2 \lesssim
 \left(\int_\T|p(r_j\zeta)|^2\,d\omega\right)^{1-\omega(E)} \cdot
 \left(\int_\T
 \bigl(L_j^{-2}|g(r_j\zeta)|^2+ L_j^{-4}|q(r_j\zeta)|^2\bigr)\,d\omega\right)^{\omega(E)}.
\]
Combining the estimate with
$|p(z)|^2\leq \int_\T|p(r_j\zeta)|^2\,d\omega$, with respective
powers $\gamma/\omega(E)$ and $1-\gamma/\omega(E)$, gives
\[
 \begin{aligned}
 |p(z)|^2
 &\lesssim
 \left(\int_\T|p(r_j\zeta)|^2\,d\omega\right)^{1-\gamma}\\
 &\quad\times
 \left(\int_\T
 \bigl(L_j^{-2}|g(r_j\zeta)|^2+
       L_j^{-4}|q(r_j\zeta)|^2\bigr)\,d\omega\right)^\gamma.
 \end{aligned}
\]
Average over $z=r_{j-1}\xi$. H\"older's inequality,
\eqref{square}, and the hypothesis on $L_j$ give
\[
 \begin{aligned}
 \int_\T|p(r_{j-1}\xi)|^2\,dm(\xi)
 &\lesssim
 \left(\int_\T|p(r_j\zeta)|^2\,dm(\zeta)\right)^{1-\gamma}\\
 &\quad\times
 \left(\frac1{L_j^2}\int_\T|g(r_j\zeta)|^2\,dm(\zeta)
       +\frac1{L_j^4}\int_\T|q(r_j\zeta)|^2\,dm(\zeta)\right)^\gamma\\
 &\lesssim \mathcal{I}_j^{2(1-\gamma)}
 \left(\varepsilon^{-2}_j+\frac{\mathcal{I}_j^2}{L_j^4}\right)^\gamma.
 \end{aligned}
\]
Now the symmetry of $F$ gives $|F^{-1}|\ge L_j$ on the reflected
set $-E$, and
\[
 \omega\left(\frac z{r_j},\D,-E\right)
 =\omega\left(-\frac z{r_j},\D,E\right)\ge\gamma.
\]
The same argument therefore applies to $q$. Adding the two
estimates and taking square roots yields
\begin{equation}\label{joint-recursion}
 \mathcal{I}_{j-1}\lesssim
 \varepsilon_j^{-\gamma}\mathcal{I}_j^{1-\gamma}+L_j^{-2\gamma}\mathcal{I}_j.
\end{equation}

\proofpart{3}{Propagating the estimate to the origin:}
Put $\alpha_j=\varepsilon_j^2\mathcal{I}_j$, and note that multiplying \eqref{joint-recursion}
by $\varepsilon_{j-1}^2$ gives
\[
 \alpha_{j-1}\le C\Bigl(\frac{\varepsilon_{j-1}}{\varepsilon_j}\Bigr)^2
 \left(\varepsilon_j^{\gamma}\alpha_j^{1-\gamma}+L_j^{-2\gamma}\alpha_j\right).
\]
Since $\varepsilon_j\to0$, $L_j\to\infty$ and $\varepsilon_{j-1}\lesssim\varepsilon_j$, both coefficients
tend to zero, so for a sufficiently large $j_0$, independent of $p$ and $q$, we have
\[
 \alpha_{j-1}\le \alpha_j^{1-\gamma}+\tfrac12\alpha_j,
 \qquad j\ge j_0+1.
\]
Since $p,q$ are polynomials, $S_0=\sup_{j\ge j_0}\alpha_j$ is finite, and thus taking the supremum gives
\[
 S_0\le S_0^{1-\gamma}+\tfrac12S_0,
 \qquad S_0\lesssim1.
\]
Consequently, we have arrived at the estimate
\[
 |p(0)|+|q(0)|\le2\mathcal{I}_{j_0}\le2\varepsilon_{j_0}^{-2}S_0\lesssim1,
\]
which upon restoring the normalization gives \eqref{richter-estimate}, hence completes the proof.
\end{proof}

\begin{remark}[The role of the symmetry]\label{REM:symmetry}
Apart from ensuring $F^{-1}\in\B_0$, the symmetry $F(-z)=F(z)^{-1}$ enters the proof only through the reflected sets $-E$. On these sets $F^{-1}$ is large, so that $q$ can be controlled exactly as $p$. It is this coupling of $p$ and $q$ that yields index two.

If one only asks for non-cyclicity of $F$, there is a separate criterion, based on subharmonicity and pointwise estimates instead of square means. The aim is then an estimate
\[
 \max_{|z|\le r_k}|p(z)|\le C_k\norm{pF}_\B
\]
for analytic polynomials $p$, where the constants $C_k$ grow sufficiently slowly. It is obtained by propagating the maxima of $|p|$ on the discs $r_j\D$ backwards to the origin, as for the square means in Step~3, but it requires additional conditions on the radii $r_j$. In the limit, one gets $|h|\le C_k|F|\norm h_\B$ on $r_k\D$ for every $h\in[F]$. Hence $1\notin[F]$, provided that $F$ is sufficiently small, compared with $C_k^{-1}$, somewhere on one of the discs $r_k\D$. We do not carry out the details here.
\end{remark}

\subsection{A general principle for preventing cyclicity}\label{SEC:general}
The proof of Theorem~\ref{THM:SUFFNONCYCLIC} uses the space $\B_0$ only through the square-mean estimate \eqref{square} and the Borichev--Richter lemma. It therefore extends to a broad class of spaces of analytic functions, as follows.

\begin{thm}[A general harmonic-measure criterion]\label{THM:general}
Let $X$ be a Banach space of analytic functions in $\D$ with the following two properties.
\begin{enumerate}
 \item[(a)] Point evaluations are continuous on $X$, the analytic polynomials are dense in $X$, and $M_z$ is bounded and bounded below on $X$.
 \item[(b)] There exists a function $\Phi$ with $\Phi(r)\to\infty$ as $r\to1^-$, such that
 \[
  \int_\T|g(r\zeta)|^2\,dm(\zeta)\lesssim\|g\|_X^2\,\Phi(r),\qquad g\in X,\quad 0<r<1.
 \]
\end{enumerate}
Let $F$ be a zero-free analytic function in $\D$ with $F,F^{-1}\in X$ and $F(-z)=F(z)^{-1}$. Let $\{r_j\}_{j\ge0}\subset(0,1)$ be strictly increasing to $1$, and let $L_j>0$ satisfy
\[
 L_j\longrightarrow\infty,
 \qquad \varepsilon_j\sqrt{\Phi(r_j)}\lesssim L_j,
\]
for some numbers $\varepsilon_j>0$ with $\varepsilon_j\to0$ and $\varepsilon_{j-1}\lesssim\varepsilon_j$. If there exists $0<\gamma<1$ such that \eqref{visible} holds for all sufficiently large $j$, then $[F,F^{-1}]$ has index $2$ in $X$. In particular, neither $F$ nor $F^{-1}$ is cyclic in $X$.
\end{thm}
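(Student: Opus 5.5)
The plan is to repeat the proof of Theorem~\ref{THM:SUFFNONCYCLIC} essentially verbatim, isolating the only two places where $\B_0$ was used. The first is the square-mean estimate \eqref{square}, which is replaced by hypothesis (b), with $\Phi(r)$ in place of $\log\frac e{1-r}$. The second is the Borichev--Richter lemma (Lemma~\ref{richter}), which must be available in $X$. So the first step is to check that Lemma~\ref{richter} holds for $X$ under hypothesis (a).

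To do this, I would reprove the lemma in the abstract setting. Suppose $\sum_j|p_j(0)|\le C\|\sum_j p_jf_j\|_X$ for all polynomials $p_j$. Then the functionals $\Lambda_j\colon\sum_k p_kf_k\mapsto p_j(0)$ are well defined: if $\sum_k p_kf_k=0$, every $p_j(0)$ vanishes. They are bounded on the dense subspace of $\mathcal M=[f_1,\dots,f_n]$ spanned by the polynomial combinations, so they extend continuously to $\mathcal M$. Each $\Lambda_j$ vanishes on $z\cdot(\text{polynomial combinations})$, and this set is dense in $z\mathcal M$ because $M_z$ is bounded. Since $M_z$ is bounded below, $z\mathcal M$ is closed, so $\Lambda_j$ vanishes on all of $z\mathcal M$.

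It follows that the images of $f_1,\dots,f_n$ in $\mathcal M/z\mathcal M$ are linearly independent, because $\Lambda_j(f_k)=\delta_{jk}$. They also span, since every polynomial combination $\sum p_kf_k$ is congruent to $\sum p_k(0)f_k$ modulo $z\mathcal M$, and such combinations are dense in $\mathcal M$. A finite-dimensional image is closed, so the quotient has dimension exactly $n$. Continuity of point evaluations is used to make sense of $p(0)$ in the propagation argument. If the paper's cited lemma is stated in this generality, I would simply invoke it.

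Next I would run Steps~2 and~3 with $\|g\|_X\le1$, $g=pF+qF^{-1}$. Step~2 uses only subharmonicity of $\log|p|$ and $|p|^2$ on $r_j\D$, the log-convexity/Jensen splitting over $E$ and $E^c$, and the bound $\int|g(r_j\cdot)|^2dm\lesssim\Phi(r_j)$ from (b). Together with $\varepsilon_j\sqrt{\Phi(r_j)}\lesssim L_j$, this gives $L_j^{-2}\int|g(r_j\cdot)|^2\lesssim\varepsilon_j^{-2}$. The symmetry $F(-z)=F(z)^{-1}$ transfers the estimate to $q$ on $-E$, and we obtain the recursion \eqref{joint-recursion} unchanged. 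Step~3 is purely numerical. Since $p,q$ are polynomials, $\mathcal I_j$ is bounded, so $S_0<\infty$. We then get $|p(0)|+|q(0)|\lesssim\|pF+qF^{-1}\|_X$ by homogeneity, and the abstract lemma yields index $2$. Non-cyclicity follows because $[F,F^{-1}]=X$ would have index $1$: $X/zX$ is one-dimensional, since polynomials are dense and $zX$ is closed, being the closure of $z\cdot$polynomials.

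The main obstacle I anticipate is the abstract Borichev--Richter step, specifically the closedness of $z\mathcal M$ and the resulting lower bound on the index. That is exactly where the hypothesis that $M_z$ is bounded below is needed. A secondary point to verify is that hypothesis (b) is used only for $g\in X$, which it is. The rest is a transcription of the earlier proof.
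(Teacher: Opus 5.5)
Your proposal is correct and follows the paper's own sketch: you rerun the proof of Theorem~\ref{THM:SUFFNONCYCLIC} with hypothesis (b) in place of \eqref{square}, and you apply the Borichev--Richter lemma in $X$ using (a). The only difference is that you prove the abstract Borichev--Richter lemma directly from the boundedness and lower boundedness of $M_z$, whereas the paper only invokes it; your argument there is sound, and it does not actually need continuity of point evaluations, contrary to your remark.
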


\begin{proof}[Sketch of proof]
The proof of Theorem~\ref{THM:SUFFNONCYCLIC} carries over with two changes.
\begin{enumerate}
 \item[(1)] By (a), the Borichev--Richter lemma applies in $X$, so it suffices to show that
 \[
  |p(0)|+|q(0)|\lesssim\|pF+qF^{-1}\|_X
 \]
 for all analytic polynomials $p$ and $q$.
 \item[(2)] Assumption (b) takes the place of \eqref{square}, with $\Phi(r)$ in place of $\log\frac e{1-r}$. Together with the hypothesis $\varepsilon_j\sqrt{\Phi(r_j)}\lesssim L_j$, it gives the bound in Step~2.
\end{enumerate}
The rest of the argument involves only harmonic measure, the symmetry of $F$ and the polynomials $p$ and $q$.
\end{proof}

Examples include the Bergman spaces $A^p$ with $1\le p<\infty$, where $\Phi(r)=(1-r)^{-\beta}$ for a suitable $\beta=\beta_p>0$, and the weighted little Bloch spaces discussed after Corollary~\ref{COR:CYCINV}, with $\Phi=\Phi_w$.

\medskip\paragraph{\textbf{A new manifestation of the Borichev--Hedenmalm principle.}}
In the work of Borichev and Hedenmalm \cite{borichev1995cyclicity,borichevhedenmalmcyclicity}, the impediment to cyclicity comes from the geometry of the sets where the function is large. Their ``mushroom forest'' construction is designed so that the unit circle has harmonic measure zero in the complement of the forest, as seen from any point inside the disc, and it is this property which allows the polynomial factors to be controlled.

Theorem~\ref{THM:general} rests on the same mechanism, but asks for much less, namely that the sets of large values are visible in harmonic measure, uniformly at every scale. No explicit geometry of these sets is needed, which is what allows the visibility to be obtained from moment estimates alone in Section~3. Moreover, the criterion identifies $[F,F^{-1}]$ as an invariant subspace of index two.

\section{Constructing the functions \texorpdfstring{$F$}{F}}

\subsection{Outline of the proof strategy}\label{SEC:GUIDE}
The idea is to construct $F$ of the form
\[
F(z) = \exp f(z), \qquad z\in \D
\]
by successively approximating $f$ with odd analytic
polynomials $f_k$, chosen so that
\begin{enumerate}
    \item[(i)] $F \in \B_0$,
    \item[(ii)] $F$ satisfies the condition \eqref{visible} in Theorem~\ref{THM:SUFFNONCYCLIC}.
\end{enumerate}
Indeed, once $F$ belongs to $\B_0$, the oddness of each $f_k$ forces 
\[
\frac{1}{F(z)} = F(-z)  \in \B_0,
\]
while \eqref{visible} prevents $F$ from being cyclic in $\B_0$. 

To carry out this program, we are therefore faced with the following sub-problems:

\begin{enumerate}
    \item[1.)] Construct the functions $f_k$ in such a way that $F \in \B_0$.
    \item[2.)] Obtain the uniform lower bound on harmonic measure in \eqref{visible}.
\end{enumerate}
Below, we explain these two components of the construction.

\paragraph{\textbf{Accomplishing membership in $\B_0$.}}
The polynomials $f_k$ will be constructed recursively by
adding polynomial terms of the form
\begin{equation}\label{DEF:fk}
 f_k(z)=f_{k-1}(z)+\sum_{j=1}^{M_k}z^{N_j} P_j(z), \qquad N_j>0
\end{equation}
The principal difficulty is to keep the accumulated changes in
$e^{\pm f_k}$ small in Bloch norm, while simultaneously allowing a sufficiently long block of $M_k$ terms to produce the large values
required in the second part of the construction.

At each insertion, the \emph{damping polynomial} $P_j$ compensates
for the size of the preceding exponential. If the preceding exponent
is $f_{k-1}$, this means
\[
 |P_j(\zeta)|e^{|\Re f_{k-1}(\zeta)|}
 \asymp\varepsilon_k,\qquad \zeta\in\T.
\]
Meanwhile, multiplication by $z^{N_j}$ makes each added term vanish
to high order, and therefore places its change near the boundary.
Indeed, Lemma~\ref{frequency-selection} gives
\[
 (1-|z|)
 \bigl|(e^{f_{k-1}+z^{N_j}P_j}-e^{f_{k-1}})'(z)\bigr|
 \lesssim\varepsilon_k N_j(1-|z|)|z|^{N_j-1},
\]
and its right-hand side is concentrated where
$1-|z|\asymp N_j^{-1}$. Since the frequencies $N_j$ grow geometrically, these regions are well separated, and at each point $z$ only boundedly many of the terms are not geometrically small. This allows us to add $M_k$ terms of the form $z^{N_j}P_j(z)$, while keeping the accumulated Bloch norm small.

\paragraph{\textbf{Accomplishing a uniform lower bound on harmonic measure.}}

We shall use the block length $M_k$ in defining $f_k$ in \eqref{DEF:fk}, to produce large values of $e^{f_k}$. The difficulty is
to ensure that these values occupy a set of uniformly
positive harmonic measure, rather than increasingly
small exceptional sets. The construction will be guided by the following lemma from probability theory.

\begin{lemma}[Paley--Zygmund]\label{paley-zygmund}
If $X$ is a non-negative random variable on a probability space with $0<\mathbb{E}(X^2)<\infty$, then
\[
\mathbb{P}\left(X\geq \frac{1}{2} \,\mathbb{E}X\right)
\geq \frac{1}{4}\frac{(\mathbb{E}X)^2}{\mathbb{E}(X^2)}.
\]
\end{lemma}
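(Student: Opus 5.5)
The plan is to derive the inequality from the second moment method: bound $\mathbb{E}X$ from above by the portion of the expectation on the small-value set plus the portion on the large-value set, and then control the latter with Cauchy--Schwarz.

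Write $A=\{X\geq \frac12\mathbb{E}X\}$ and denote its indicator by $\mathbf 1_A$. Split
\[
\mathbb{E}X=\mathbb{E}(X\mathbf 1_{A^c})+\mathbb{E}(X\mathbf 1_A).
\]
On $A^c$ we have $X<\frac12\mathbb{E}X$. Since $X\geq 0$, the first term is therefore at most $\frac12\mathbb{E}X$.

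For the second term, apply Cauchy--Schwarz:
\[
\mathbb{E}(X\mathbf 1_A)\le \bigl(\mathbb{E}(X^2)\bigr)^{1/2}\,\mathbb{P}(A)^{1/2}.
\]
Combining the two bounds gives
\[
\tfrac12\,\mathbb{E}X\le \bigl(\mathbb{E}(X^2)\,\mathbb{P}(A)\bigr)^{1/2}.
\]
Squaring both sides and dividing by $\mathbb{E}(X^2)$ yields the claim. The division is legitimate because $0<\mathbb{E}(X^2)<\infty$ by hypothesis. Finiteness of $\mathbb{E}(X^2)$ also guarantees that $\mathbb{E}X$ is finite, since $\mathbb{E}X\le(\mathbb{E}X^2)^{1/2}$ by Cauchy--Schwarz. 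The case $\mathbb{E}X=0$ needs no separate treatment, because the right-hand side of the inequality is then zero and the statement holds trivially.

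There is no genuine obstacle here. The only point requiring care is the bound on $A^c$: it uses $X\ge0$ to ensure that $X\mathbf 1_{A^c}$ is pointwise bounded by $\frac12\mathbb{E}X$. This is exactly where non-negativity of $X$ enters.
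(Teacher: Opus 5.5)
Your proof is correct and is essentially the paper's argument: split $\mathbb{E}X$ over $\{X\ge\frac12\mathbb{E}X\}$ and its complement, bound the small-value part by $\frac12\mathbb{E}X$, apply Cauchy--Schwarz to the remaining part, and rearrange. One small refinement: non-negativity is really used through $\mathbb{E}X\ge 0$, and this fact is needed both for the pointwise bound and when you square the final inequality.
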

\begin{proof}
Since the proof is short and simple, we provide it for the reader's convenience. Put $E=\{X\geq \frac{1}{2} \,\mathbb{E}X\}$, and note that the Cauchy--Schwarz inequality gives
\[
\mathbb{E}X
\leq \frac{1}{2} \mathbb{E}X+\mathbb{E}(X\mathbf{1}_E)
\leq \frac{1}{2} \mathbb{E}X+
\bigl(\mathbb{E}(X^2)\bigr)^{1/2}\, \mathbb{P}(E)^{1/2}.
\]
The claim follows from a re-arrangement.
\end{proof}
For $|z|\le r_{k-1}$, consider the random variable
$X(\zeta)=|F(r_k\zeta)|^2$ on $\T$ with probability measure
\[
 d\mathbb P_k(\zeta)=P_{z/r_k}(\zeta)\,dm(\zeta).
\]
We shall choose thresholds $L_k\asymp\varepsilon_k\sqrt{M_k}$
and establish
\begin{equation}\label{EQ:2nd4th}
 \begin{aligned}
 \mathbb E X
 &=\int_\T |F(r_k\zeta)|^2P_{z/r_k}(\zeta)\,dm(\zeta)
 \gtrsim L_k^2,\\
 \mathbb E(X^2)
 &=\int_\T |F(r_k\zeta)|^4P_{z/r_k}(\zeta)\,dm(\zeta)
 \lesssim L_k^4.
 \end{aligned}
\end{equation}
This is where the block length $M_k$ in \eqref{DEF:fk} enters. Each insertion in the $k$th block raises the second moment by roughly $\varepsilon_k^2$, so a block of length $M_k$ produces $\mathbb EX\asymp\varepsilon_k^2M_k$, while the damping keeps $\mathbb E(X^2)$ of the order $(\varepsilon_k^2M_k)^2$. Proposition~\ref{PROP:blocks} below provides these estimates for $e^{f_k}$. They persist for $F$, since all later terms vanish to such high order that they are negligible on the disc $r_k\D$.

The lower bound on $\mathbb E X$ guarantees large values,
while the upper bound on $\mathbb E(X^2)$ prevents these
values from being confined to sets of arbitrarily small
harmonic measure. Choosing the constant in
$L_k\asymp\varepsilon_k\sqrt{M_k}$ sufficiently small,
the Paley--Zygmund lemma gives
\[
 \mathbb{P}_k\left(
 \left\{\zeta\in\T:
 |F(r_k\zeta)|\ge L_k\right\}\right)
 \gtrsim1,
\]
which is the required lower bound in \eqref{visible}.
It remains to choose the block lengths and radii so that
\[
 L_k\longrightarrow\infty,
 \qquad
 \varepsilon_k\sqrt{\log\frac e{1-r_k}}\lesssim L_k.
\]
Both are achieved by the fixed frequency ratio within each block, that is, $N_j=\kappa^jN_0$. The $k$th block then ends at the frequency $\kappa^{M_k}N_0$, and the radius $r_k$ is chosen at this scale, so that
\[
 \log\frac e{1-r_k}\asymp M_k,
\]
see \eqref{choice-scales}. Hence $\varepsilon_k\sqrt{\log\frac e{1-r_k}}$ is comparable to $\varepsilon_k\sqrt{M_k}\asymp L_k$, and $L_k\to\infty$ as soon as the block lengths $M_k$ are taken sufficiently large.

This is where the difference from the ``mushroom forest'' construction of Borichev and Hedenmalm, discussed in Section~\ref{SEC:general}, becomes concrete. The Paley--Zygmund lemma produces the visibility from the two moment bounds in \eqref{EQ:2nd4th} alone, without any information on the geometry of the sets where $F$ is large, and this flexibility is what we exploit in Proposition~\ref{PROP:blocks}.

\subsection{The principal building blocks}
Each added term is built from two ingredients. A damping polynomial compensates for the size of the preceding exponential, and a factor $z^N$ places the change near the boundary, at the scale $1/N$. We first construct the damping polynomials.

\begin{lemma}[Damping Lemma]\label{LEM:damping}
Let $0<\varepsilon<1$ and let $Q$ be an analytic polynomial. There exists an analytic polynomial $P$ without zeros in $\cD$ such that
\begin{equation}\label{damping}
 c\varepsilon\le e^{|\Re Q|}|P|\le\varepsilon\qquad\text{on }\T,
\end{equation}
where $c>0$ is an absolute constant. If $Q$ is odd, then $P$ can be chosen even.
\end{lemma}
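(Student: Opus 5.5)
The plan is to build $P$ as the exponential of a polynomial that approximates the outer function with modulus $\varepsilon e^{-|\Re Q|}$ on $\T$. Concretely, set $u=|\Re Q|$ on $\T$. This is a continuous (indeed Lipschitz) real function on $\T$, since $\Re Q$ is a trigonometric polynomial. Let $\tilde u$ denote its harmonic conjugate and $G=u+i\tilde u$ the analytic function in $\D$ with $\Re G=u$ on $\T$ and $\Im G(0)=0$. Because $u$ is Lipschitz, its Fourier series converges uniformly. Hence the Fejér means $\sigma_n u$ converge uniformly to $u$ on $\T$. Each $\sigma_n u$ is the real part of an analytic polynomial $H_n$, namely the analytic projection doubled minus the constant term. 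Fix $n$ with $\sup_\T|\sigma_n u-u|\le \delta$ for a small absolute $\delta$, say $\delta=\tfrac12$. Then set
\[
P=\varepsilon e^{-\delta}\exp(-H_n).
\]
On $\T$ we get $e^{u}|P|=\varepsilon e^{-\delta}e^{u-\Re H_n}$, which lies in $[\varepsilon e^{-2\delta},\varepsilon]$. This gives \eqref{damping} with $c=e^{-2\delta}$, and $P$ is clearly zero-free.

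The catch is that $\exp(-H_n)$ is entire but not a polynomial. I would therefore truncate its Taylor series. Since $\exp(-H_n)$ is entire, its Taylor polynomials $T_m$ converge uniformly on $\cD$. Choose $m$ so large that
\[
|T_m-\exp(-H_n)|\le \eta\min_{\cD}|\exp(-H_n)|
\]
on $\cD$ with, say, $\eta=\tfrac14$. Then $T_m$ has no zeros in $\cD$, and $|T_m|/|\exp(-H_n)|\in[1-\eta,1+\eta]$ there. Replace $P$ by $\varepsilon e^{-\delta}(1+\eta)^{-1}T_m$. This gives the two-sided bound with $c=e^{-2\delta}(1-\eta)/(1+\eta)$, an absolute constant. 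The degrees $n,m$ depend on $Q$ and $\varepsilon$, but that is harmless since only $c$ has to be absolute.

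For the parity statement, suppose $Q$ is odd. Then $\Re Q(-\zeta)=-\Re Q(\zeta)$, so $u=|\Re Q|$ is even on $\T$ and has only even Fourier modes. The Fejér means then contain only even modes, so $H_n$ is an even polynomial. The Taylor polynomials of $\exp(-H_n)$, a power series in $z^2$, are even as well. Hence $P$ is even.

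The only point needing care is the uniform approximation step. Uniform convergence of $\sigma_n u$ to $u$ needs only continuity of $u$, by Fejér's theorem, so this is in fact routine. The one check I would make is that the additive constant is handled consistently, taking $H_n$ so that $\Re H_n=\sigma_n u$ exactly; all constants are then absolute.
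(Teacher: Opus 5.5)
Your proof is correct, but it takes a different route from the paper. The paper works multiplicatively with the weight $e^{-2|\Re Q|}$. A Fej\'er mean $T$ of this weight satisfies $\tfrac12 e^{-2|\Re Q|}\le T\le\tfrac32 e^{-2|\Re Q|}$. Since $T$ is a strictly positive trigonometric polynomial, the Fej\'er--Riesz theorem gives a polynomial $p$ without zeros in $\cD$ with $|p|^2=T$ on $\T$. Evenness is then deduced from uniqueness: a polynomial without zeros in $\cD$ is determined by its modulus on $\T$ up to a unimodular constant, which is fixed by $P(0)>0$.

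You work additively at the level of the logarithm instead. You approximate $|\Re Q|$ by $\Re H_n$ and exponentiate, which gives an entire zero-free function whose modulus is correct up to a factor $e^{\pm\delta}$. You then truncate its Taylor series with relative error $\eta$ on $\cD$, which preserves both zero-freeness and the two-sided bound.

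Your route avoids the Fej\'er--Riesz factorization and the splitting of roots behind it. It also gets the parity statement directly from the vanishing of the odd Fourier modes of $|\Re Q|$, with no uniqueness argument. The paper's route is shorter once Fej\'er--Riesz is granted, and its $P$ has $|P|^2$ exactly equal to a trigonometric polynomial.

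The harmonic conjugate $\tilde u$ and the function $G$ in your first paragraph are never used and can be dropped.
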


\begin{proof}
Since $\Re Q$ is a trigonometric polynomial, the weight $e^{-2|\Re Q|}$ is positive and continuous on $\T$, and a Fej\'er mean $T$ of it of sufficiently high order satisfies
\[
 \tfrac12e^{-2|\Re Q|}\le T\le\tfrac32e^{-2|\Re Q|}\qquad\text{on }\T.
\]
As $T$ is a strictly positive trigonometric polynomial, the Fej\'er--Riesz theorem provides an analytic polynomial $p$ without zeros in $\cD$ such that $T=|p|^2$ on $\T$. Taking $P=\sqrt{2/3}\,\varepsilon p$, we obtain
\[
 \frac{\varepsilon}{\sqrt3}\le e^{|\Re Q|}|P|\le\varepsilon\qquad\text{on }\T,
\]
which is \eqref{damping} with $c=1/\sqrt3$. Such a polynomial $P$ is determined by its modulus on $\T$ up to a unimodular constant, which we may fix by $P(0)>0$. If $Q$ is odd, then the weight and its Fej\'er means are even, so $|P|$ is even on $\T$, and hence $P(-z)=P(z)$.
\end{proof}

We now insert the damping polynomials at lacunary frequencies, prescribed in advance.

\begin{lemma}\label{frequency-selection}
Fix an analytic polynomial $Q_0$ and amplitudes
$0<\varepsilon_j\le1/100$. For a sufficiently large
integer $\kappa>0$, and a sufficiently large initial frequency $N_0>0$, the
recursion
\[
 Q_j=Q_{j-1}+z^{N_j}P_j,\qquad N_j\ge \kappa N_{j-1},
\]
can be carried out with
\[
 c\varepsilon_j\le |P_j(\zeta)|e^{|\Re Q_{j-1}(\zeta)|}
 \le\varepsilon_j,\qquad \zeta\in\T.
\]
The exponential increments satisfy
\begin{equation}\label{tail}
 |e^{\pm Q_j(z)}-e^{\pm Q_{j-1}(z)}|\lesssim\varepsilon_j|z|^{N_j},
 \qquad
 |(e^{\pm Q_j}-e^{\pm Q_{j-1}})'(z)|
 \lesssim\varepsilon_jN_j|z|^{N_j-1},\qquad z\in\D.
\end{equation} If $Q_0$ and all $N_j$ are odd, the $P_j$ can be chosen even, so that every $Q_j$ is odd.
\end{lemma}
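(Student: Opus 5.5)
The recursion will be carried out one step at a time, with the Damping Lemma~\ref{LEM:damping} supplying each $P_j$. Given $Q_{j-1}$, apply Lemma~\ref{LEM:damping} with $Q=Q_{j-1}$ and $\varepsilon=\varepsilon_j$. This yields a zero-free polynomial $P_j$ with $c\varepsilon_j\le e^{|\Re Q_{j-1}|}|P_j|\le\varepsilon_j$ on $\T$, and $P_j$ is even whenever $Q_{j-1}$ is odd. Then set $Q_j=Q_{j-1}+z^{N_j}P_j$. If $Q_0$ and all $N_j$ are odd, then $z^{N_j}P_j$ is odd, so every $Q_j$ is odd by induction. The one thing left to prove is the increment bound \eqref{tail}. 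The choice of $N_j$ has to be compatible with it; in fact I expect \eqref{tail} to hold for any $N_j\ge 1$, with $\kappa$ and $N_0$ only needed for later use.

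For the bound on the values, write $e^{\pm Q_j}-e^{\pm Q_{j-1}}=e^{\pm Q_{j-1}}\bigl(e^{\pm z^{N_j}P_j}-1\bigr)$. On $\T$ we have
\[
|e^{\pm Q_{j-1}}|\,|z^{N_j}P_j|=e^{\pm\Re Q_{j-1}}|P_j|\le\varepsilon_j .
\]
In particular $|P_j|\le\varepsilon_j\le 1/100$ on $\T$. Hence $|e^{w}-1|\le 2|w|$ holds for $w=\pm z^{N_j}P_j$, and the product is bounded by $2\varepsilon_j$ on $\T$. To recover the factor $|z|^{N_j}$ inside $\D$, divide by $z^{N_j}$. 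The function $h=z^{-N_j}\bigl(e^{\pm Q_j}-e^{\pm Q_{j-1}}\bigr)=e^{\pm Q_{j-1}}P_j\,\frac{e^{\pm z^{N_j}P_j}-1}{z^{N_j}P_j}$ is analytic in $\D$. The quotient factor is bounded by $2$ on $\cD$, since $|z^{N_j}P_j|\le\max_\T|P_j|\le 1/100$ there by the maximum principle. The factor $e^{\pm Q_{j-1}}P_j$ is analytic with modulus at most $\varepsilon_j$ on $\T$, because $e^{\pm\Re Q_{j-1}}\le e^{|\Re Q_{j-1}|}$. The maximum principle then gives $|h|\le 2\varepsilon_j$ on $\D$, which is the first estimate in \eqref{tail}.

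The derivative bound is the main obstacle. A Cauchy estimate on a disc of radius $(1-|z|)/2$ only gives $\varepsilon_j|z|^{N_j}/(1-|z|)$, which loses against $N_j|z|^{N_j-1}$ when $1-|z|\ll 1/N_j$. I would instead differentiate $z^{N_j}h$ directly: $(z^{N_j}h)'=N_jz^{N_j-1}h+z^{N_j}h'$. The first term is $\le 2\varepsilon_jN_j|z|^{N_j-1}$. For the second, it is enough to bound $|h'|\lesssim\varepsilon_jN_j$ on $\D$. Since $h$ is a polynomial-times-entire, hence analytic near $\cD$, a Bernstein-type inequality is tempting, but $h$ is not a polynomial of controlled degree. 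So I would use the explicit form
\[
(e^{\pm Q_j}-e^{\pm Q_{j-1}})'=\pm Q_{j-1}'\bigl(e^{\pm Q_j}-e^{\pm Q_{j-1}}\bigr)\pm e^{\pm Q_j}(z^{N_j}P_j)'.
\]
Here $Q_{j-1}'$ and $P_j$ are fixed before $N_j$ is chosen. For $|z|\ge 1/2$, the first term is at most $\|Q_{j-1}'\|_\infty\varepsilon_j|z|^{N_j}$, which is $\le\varepsilon_jN_j|z|^{N_j-1}$ once $N_j\ge\|Q_{j-1}'\|_\infty$. The second term is $e^{\pm Q_{j-1}}e^{\pm z^{N_j}P_j}\bigl(N_jz^{N_j-1}P_j+z^{N_j}P_j'\bigr)$. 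Its piece $e^{\pm Q_{j-1}}P_j\,N_jz^{N_j-1}$ is bounded by $\varepsilon_jN_j|z|^{N_j-1}$ by the maximum principle applied to $e^{\pm Q_{j-1}}P_j$. Its piece $e^{\pm Q_{j-1}}P_j'\,z^{N_j}$ needs $|e^{\pm Q_{j-1}}P_j'|\le C_{j-1}\varepsilon_j$ with $C_{j-1}$ depending only on previous data. Such a bound holds because $P_j$ is a fixed polynomial and $|P_j|\ge c\varepsilon_je^{-|\Re Q_{j-1}|}$, so $P_j'$ is bounded by a constant multiple of $\varepsilon_j$ depending on $Q_{j-1}$. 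Choosing $N_j\ge\kappa N_{j-1}$ and also $N_j\ge C_{j-1}$ absorbs this into $\varepsilon_jN_j|z|^{N_j-1}$. For $|z|<1/2$, everything is bounded by a constant times $\varepsilon_j|z|^{N_j-1}$, since each factor is analytic and vanishes to order $N_j-1$. This is where "sufficiently large $N_0$, $\kappa$" is really used.
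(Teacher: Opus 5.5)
Your bound on the values is correct. Your derivative bound is also correct \emph{if} each $N_j$ may be chosen after $P_j$ has been constructed, subject to $N_j\ge\max(\kappa N_{j-1},\|Q_{j-1}'\|_\infty,C_{j-1})$. That is the gap. The constant $C_{j-1}$ bounds $\|e^{\pm Q_{j-1}}P_j'\|_\infty/\varepsilon_j$, and $\|Q_{j-1}'\|_\infty$ involves $\|P_\ell'\|_\infty$ for $\ell<j$. Both are dictated by the Fej\'er--Riesz polynomials of Lemma~\ref{LEM:damping}, whose degrees you do not control in terms of $N_{j-1}$. So these requirements cannot be met by taking $\kappa$ and $N_0$ large once and for all, as your last sentence suggests. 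Your opening expectation that \eqref{tail} holds for every $N_j\ge1$ is also contradicted by your own argument.

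The lemma is meant for frequencies prescribed in advance, $N_j=\kappa^jN_0$ with $\kappa$ absolute, and this matters later. In \eqref{choice-scales} the paper uses $N_{M_k}=\kappa^{M_k}N_0$ to get $\log\frac e{1-r_k}\lesssim M_k$, hence $\varepsilon_k\sqrt{\log\frac e{1-r_k}}\lesssim L_k$ in Theorem~\ref{THM:SUFFNONCYCLIC}. With adaptively chosen $N_j$ there is no upper bound on $N_{M_k}$, and this comparison is lost.

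The missing idea is to damp on the slightly larger circle $R_j\T$ with $R_j=1+1/N_j$, that is, to apply Lemma~\ref{LEM:damping} to $Q_{j-1}(R_jz)$ and rescale. The maximum principle then gives $|e^{\pm Q_{j-1}}P_j|\le\varepsilon_j$ on $R_j\D$. Since $R_j^{N_j}\le e$, the increment is $O(\varepsilon_j)$ there. The Schwarz lemma and a Cauchy estimate at distance $1/N_j$ from $\T$ then give both bounds in \eqref{tail} with absolute constants, whatever the degree of $P_j$.

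The upper damping bound on $\T$ still follows from the maximum principle, but the lower one is no longer automatic. It is recovered from the harmonicity of $\log|P_j|$ on $R_j\cD$ together with the Poisson-weighted oscillation estimate \eqref{damping-oscillation}. In that estimate each earlier term $z^{N_\ell}P_\ell$ is $O(\varepsilon_\ell)$ on $R_\ell\D$, so it contributes only $\varepsilon_\ell\sqrt{N_\ell/N_j}\le\varepsilon_\ell\kappa^{(\ell-j)/2}$. The condition $N_0\ge\|Q_0\|^2_{L^\infty(2\D)}$ takes care of $Q_0$. This is where the lacunarity and the size of $N_0$ genuinely enter.
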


\begin{proof} 
\proofpart{0}{Strategy and notions:}
Since $N_j$ is prescribed, we need control of $P_j$ at the scale $1/N_j$, which we shall obtain by constructing $P_j$ on the slightly larger circle $R_j\T$. Cauchy estimates then converts boundedness into the required derivative bounds.
To this end, we shall write 
\[
\|h\|_{L^\infty(R\D)}=\max_{|z|\le R}|h(z)|, \qquad R>0
\]
and put $R_j=1+1/N_j$. The point of the proof is that the damping survives the passage from $R_j\T$ to $\T$.

\proofpart{1}{Construction of $P_j$ and the upper bounds:}
Applying Lemma~\ref{LEM:damping} to $Q_{j-1}(R_jz)$ and rescaling, we obtain a polynomial $P_j$ without zeros in $R_j\cD$ with
\begin{equation}\label{outer-circle-damping}
 c\varepsilon_j\le|P_j(R_j\eta)|\,e^{|\Re Q_{j-1}(R_j\eta)|}\le\varepsilon_j,\qquad\eta\in\T.
\end{equation}
 The maximum principle, applied to $P_je^{Q_{j-1}}$ and $P_je^{-Q_{j-1}}$, extends the upper bound in \eqref{outer-circle-damping} to $|z|\le R_j$. In particular $\|z^{N_j}P_j\|_{L^\infty(R_j\D)}\lesssim\varepsilon_j$, since $R_j^{N_j}\le e$. Writing
\[
 D_j=e^{Q_j}-e^{Q_{j-1}}=e^{Q_{j-1}}\bigl(e^{z^{N_j}P_j}-1\bigr)
\]
and using $|e^w-1|\le|w|e^{|w|}$, we get $\|D_j\|_{L^\infty(R_j\D)}\lesssim\varepsilon_j$. As $D_j$ vanishes to order $N_j$ at the origin, the Schwarz lemma and Cauchy's estimate give
\[
 |D_j(z)|\lesssim\varepsilon_j|z|^{N_j},\qquad |D_j'(z)|\lesssim\varepsilon_jN_j|z|^{N_j-1},\qquad z\in\D.
\]
The same argument applies to $-Q_j$. If $Q_{j-1}$ is odd, then $P_j$ is even by Lemma~\ref{LEM:damping}, so $z^{N_j}P_j$ is odd for odd $N_j$.

\proofpart{2}{An oscillation estimate:}
For an analytic polynomial $h$, radii $1<R<R'$ and $\zeta\in\T$, we have
\begin{equation}\label{poisson-comparison}
 \int_\T|h(R\eta)-h(\zeta)|^2P_{\zeta/R}(\eta)\,dm(\eta)\lesssim\frac{R^2-1}{R'^2-1}\,\|h\|_{L^\infty(R'\D)}^2.
\end{equation}
Indeed, with $g(z)=(h(z)-h(\zeta))/(z-\zeta)$ and $P_{\zeta/R}(\eta)=(R^2-1)|R\eta-\zeta|^{-2}$, the monotonicity of the square means of $g$ gives
\[
 \begin{aligned}
 \int_\T|h(R\eta)-h(\zeta)|^2P_{\zeta/R}(\eta)\,dm(\eta)
 &=(R^2-1)\int_\T|g(R\eta)|^2\,dm(\eta)
 \le(R^2-1)\int_\T|g(R'\eta)|^2\,dm(\eta)\\
 &\le4\|h\|_{L^\infty(R'\D)}^2\,(R^2-1)\int_\T\frac{dm(\eta)}{|R'\eta-\zeta|^2}
 =4\,\frac{R^2-1}{R'^2-1}\,\|h\|_{L^\infty(R'\D)}^2.
 \end{aligned}
\]
We apply \eqref{poisson-comparison} with $R=R_j$ to $Q_0$, with $R'=2$, and to each term $z^{N_\ell}P_\ell$ of $Q_{j-1}$, with $R'=R_\ell$. Since $R_j^2-1\asymp1/N_j$ and $\|z^{N_\ell}P_\ell\|_{L^\infty(R_\ell\D)}\lesssim\varepsilon_\ell$, Minkowski's inequality gives
\begin{equation}\label{damping-oscillation}
 \Bigl(\int_\T|Q_{j-1}(R_j\eta)-Q_{j-1}(\zeta)|^2P_{\zeta/R_j}(\eta)\,dm(\eta)\Bigr)^{1/2}
 \lesssim\frac{\|Q_0\|_{L^\infty(2\D)}}{\sqrt{N_j}}+\sum_{\ell<j}\varepsilon_\ell\sqrt{\frac{N_\ell}{N_j}}\lesssim1,
\end{equation}
where the last step uses $N_\ell/N_j\le\kappa^{\ell-j}$ and $N_0\ge\|Q_0\|_{L^\infty(2\D)}^2$.

\proofpart{3}{The lower damping bound on $\T$:}
Fix $\zeta\in\T$. Since $P_j$ has no zeros in $R_j\cD$, the function $\log|P_j|$ is harmonic there. By the Poisson formula on $R_j\D$ and the lower bound in \eqref{outer-circle-damping},
\[
 \log|P_j(\zeta)|\ge\log(c\varepsilon_j)-\int_\T|\Re Q_{j-1}(R_j\eta)|\,P_{\zeta/R_j}(\eta)\,dm(\eta).
\]
Since $|\Re Q_{j-1}(R_j\eta)|\le|\Re Q_{j-1}(\zeta)|+|Q_{j-1}(R_j\eta)-Q_{j-1}(\zeta)|$, the Cauchy--Schwarz inequality and \eqref{damping-oscillation} bound the integral by $|\Re Q_{j-1}(\zeta)|+C$. Hence $|P_j(\zeta)|\,e^{|\Re Q_{j-1}(\zeta)|}\gtrsim\varepsilon_j$, which is the lower damping bound.

\end{proof}

\subsection{Assembling the building blocks}
We now iterate Lemma~\ref{frequency-selection} in blocks. Starting from $Q_0=f_{k-1}$, we add $M_k$ terms of amplitude $\varepsilon_k$, choosing each $P_j$ to damp the exponentials obtained at the preceding insertion,
\[
 Q_j=Q_{j-1}+z^{N_j}P_j,\qquad 1\le j\le M_k,
 \qquad f_k=Q_{M_k}.
\]
Increasing $M_k$ will build up the second moment, while damping
keeps the fourth moment under control.

\begin{prop}\thlabel{PROP:blocks}
There is an absolute odd integer $\kappa>1$ with the following property.
Fix an analytic polynomial $Q_0$, an amplitude
$0<\varepsilon_k\le1/100$, and $0\le\rho<1$.
For every sufficiently large initial frequency $N_0$ (depending on $Q_0$, $\varepsilon_k$ and $\rho$), we can construct analytic polynomials
\[
 Q_{M_k}(z)=Q_0(z)+\sum_{j=1}^{M_k}z^{N_j}P_j(z),
 \qquad N_j=\kappa^jN_0,
\]
with damping polynomials $P_j$ as in Lemma~\ref{frequency-selection}, with amplitude $\varepsilon_k$.
For all sufficiently large block lengths $M_k$ (depending also on $N_0$), we have
\begin{equation}\label{block-proposition-moments}
 \int_\T e^{2\Re Q_{M_k}}P_a\,dm\gtrsim M_k\varepsilon_k^2,
 \qquad
 \int_\T e^{4\Re Q_{M_k}}P_a\,dm\lesssim (M_k\varepsilon_k^2)^2,
 \qquad |a|\le\rho.
\end{equation}
These estimates hold uniformly for $|a|\le\rho$.
\end{prop}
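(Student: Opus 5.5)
The plan is to track the two moments
\[
 m_j(a)=\int_\T e^{2\Re Q_j}P_a\,dm,\qquad \mu_j(a)=\int_\T e^{4\Re Q_j}P_a\,dm
\]
through the recursion $Q_j=Q_{j-1}+w_j$, $w_j=z^{N_j}P_j$. I will show that each insertion raises $m_j$ by an amount comparable to $\varepsilon_k^2$, up to a small error. I will also show that $\mu_j$ grows by at most $C\varepsilon_k^2(m_{j-1}+1)$. Summing over $j\le M_k$ and absorbing the contribution of $Q_0$ into $M_k\varepsilon_k^2$, by taking $M_k$ large, then gives \eqref{block-proposition-moments}. Throughout, $P_a$ with $|a|\le\rho$ is bounded above and below by constants depending only on $\rho$, which gives the uniformity in $a$.

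\emph{Expansion of one step.} On $\T$ I will write $e^{2\Re w_j}=1+2\Re w_j+2(\Re w_j)^2+E_j$ with $|E_j|\lesssim|w_j|^3$, and use $2(\Re w_j)^2=|w_j|^2+\Re(w_j^2)$. The increment $m_j-m_{j-1}$ then splits into three parts.

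The \emph{main term} is $\int e^{2\Re Q_{j-1}}|P_j|^2P_a\,dm$. By the damping in Lemma~\ref{frequency-selection}, the integrand is at most $\varepsilon_k^2e^{2\Re Q_{j-1}-2|\Re Q_{j-1}|}P_a\le\varepsilon_k^2P_a$. It is also at least $c^2\varepsilon_k^2P_a$ on the set $\{\Re Q_{j-1}\ge0\}$. Since $Q_{j-1}$ is odd, $\Re Q_{j-1}(-\zeta)=-\Re Q_{j-1}(\zeta)$, so this set together with its reflection covers $\T$. Hence it has measure at least $1/2$, and the main term is $\asymp\varepsilon_k^2$ uniformly in $|a|\le\rho$. (If $Q_0$ is not odd, I would restrict to odd $Q_0$, which is the case used later.)

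The \emph{cubic error} is bounded by $\int e^{2\Re Q_{j-1}}|w_j|^3P_a\,dm\le\varepsilon_k^3\int e^{2\Re Q_{j-1}-3|\Re Q_{j-1}|}P_a\,dm\lesssim\varepsilon_k^3$. This is harmless since $\varepsilon_k\le1/100$.

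The \emph{oscillatory terms} are $\int e^{2\Re Q_{j-1}}\Re(w_j)P_a$ and $\int e^{2\Re Q_{j-1}}\Re(w_j^2)P_a$. These carry the factor $\zeta^{N_j}$ (respectively $\zeta^{2N_j}$), and I must show they are negligible, say $\le\varepsilon_k^2/10$ each.

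The fourth moment is handled identically with $e^{4\Re w_j}$. Its quadratic term is $\lesssim\int e^{4\Re Q_{j-1}}|P_j|^2P_a\le\varepsilon_k^2\int e^{4\Re Q_{j-1}-2|\Re Q_{j-1}|}P_a\le\varepsilon_k^2(m_{j-1}+1)$. This gives $\mu_j\le\mu_{j-1}+C\varepsilon_k^2(m_{j-1}+1)+\text{small}$. Combined with $m_j\le m_0+Cj\varepsilon_k^2$, it yields $\mu_{M_k}\lesssim\mu_0+(M_k\varepsilon_k^2)^2$, which is $\lesssim(M_k\varepsilon_k^2)^2$ once $M_k$ is large relative to $Q_0$.

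\emph{The main obstacle} is the oscillatory cross terms, because the degree of $P_j$ is not controlled relative to $N_j$ or $N_{j+1}$, so plain Fourier orthogonality is unavailable. My first attempt will be to write $e^{2\Re Q_{j-1}}w_j=\overline{e^{Q_{j-1}}}\cdot z^{N_j}P_je^{Q_{j-1}}$ and exploit analyticity. Lemma~\ref{frequency-selection} gives $|P_je^{\pm Q_{j-1}}|\le\varepsilon_k$ on $R_j\D$, with $R_j=1+1/N_j$. The Poisson kernel $P_a$ extends to a function whose Fourier coefficients decay like $\rho^{|n|}$. The conjugate factor $\overline{e^{Q_{j-1}}}$ has essentially all its frequency content below a scale controlled by $Q_{j-1}$, and this content varies little at scale $1/N_j$, by the oscillation estimate \eqref{damping-oscillation}. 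So I would replace $e^{Q_{j-1}}$ by its harmonic extension at radius slightly below $1$ (error $\lesssim$ a small power of $\kappa^{-1}$ times $\varepsilon_k$). I would then pair the remaining analytic function vanishing to order $N_j$ against a function with Fourier support of size $\ll N_j$, which gives exponentially small contributions. Choosing $\kappa$ large (absolute) and then $N_0$ large (depending on $Q_0,\varepsilon_k,\rho$) should make the total cross-term contribution per step at most a small fraction of $\varepsilon_k^2$, with the constants summable over the geometric sequence $N_j=\kappa^jN_0$.
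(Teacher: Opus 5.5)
\textbf{There is a genuine gap in the lower bound, and it concerns uniformity in $\rho$.} Your upper bounds are essentially fine, modulo the cross terms, since they only use $\int_\T P_a\,dm=1$. The per-step lower bound is the problem.

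You get it from oddness: $S=\{\Re Q_{j-1}\ge0\}$ has $m(S)\ge\frac12$, hence $\int_S P_a\,dm\ge\frac{1-\rho}{2(1+\rho)}$. This constant degenerates as $\rho\to1$, and oddness cannot do better. It only gives $\int_S(P_a+P_{-a})\,dm\ge1$, and $S$ may miss most of the $P_a$-mass near $a/|a|$. In the application, $1-\rho_k\asymp1-r_{k-1}$ is exactly the last scale of the previous block, where $\Re f_{k-1}$ varies.

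Meanwhile, your own cross-term estimate is a small power of $\kappa^{-1}$ times $\varepsilon_k^2$ (the paper gets $\varepsilon_k^2/\kappa$). Such bounds come from the frequency ratio $N_{j-1}/N_j=1/\kappa$ and do not improve as $N_0$ grows. Since $\kappa$ must be absolute, chosen before $\rho$, the main term $c^2\frac{1-\rho}{2(1+\rho)}\varepsilon_k^2$ need not dominate these errors for $\rho$ near $1$. So the claimed increment $m_j-m_{j-1}\gtrsim\varepsilon_k^2$ fails unless $\kappa$ depends on $\rho$, and even then the final lower constant depends on $\rho$.

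The proposition is used with $\rho_k\to1$ and $\kappa$ fixed once and for all. The Paley--Zygmund step there needs a visibility constant $\gamma$ independent of $k$, so absolute constants are essential. You also restrict to odd $Q_0$, whereas the statement allows any $Q_0$.

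\textbf{The missing idea is to let the block itself create the positivity}, as in Steps 3--4 of the paper. The argument runs as follows.
\begin{enumerate}
\item[(1)] Since $e^{2u-2|u|}+e^{-2u-2|u|}\ge1$ pointwise, the increments of $\int_\T(e^{2\Re Q_j}+e^{-2\Re Q_j})P_a\,dm$ are at least $(c-C/\kappa)\varepsilon_k^2$. This holds with absolute constants, for every $a$ and every $Q_0$.
\item[(2)] Together with the fourth-moment bounds, Paley--Zygmund gives $|\Re Q_j|\ge c\log A_j$ on a set of $P_a$-mass $\gtrsim1$. Hence $\int_\T|\Re Q_j|P_a\,dm\gtrsim\log A_j$.
\item[(3)] The signed mean $\int_\T\Re Q_j\,P_a\,dm=\Re Q_j(a)$ is bounded by $\norm{\Re Q_0}_\infty+\varepsilon_k/(1-\rho)$, independently of $j$. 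So the positive part carries mass $\gtrsim\log A_j$.
\item[(4)] Jensen's inequality together with $\int_\T e^{2\Re Q_j}P_a\,dm\lesssim A_j$ then forces $\int_{\{\Re Q_j>0\}}P_a\,dm\ge\delta_0$, with $\delta_0$ absolute, once $j\ge j_0(\rho,Q_0,\varepsilon_k)$.
\item[(5)] From then on each increment is at least $\frac c2\varepsilon_k^2$. Summing over the last half of a long block gives the lower bound, and $\rho$ only affects how large $M_k$ must be.
\end{enumerate}

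\textbf{Your cross-term sketch is also not yet an argument.} The function $\overline{e^{Q_{j-1}}}P_a$ has no Fourier support below $N_j$, even after passing to an interior harmonic extension. Moreover, pairing in $L^2$ costs $\norm{P_a}_{L^2}\asymp(1-\rho)^{-1/2}$, which again cannot be absorbed by an absolute $\kappa$.

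The paper instead integrates by parts in $\theta$, using $\Delta_j=\int_0^1D_j(rz)\,dr/r$ with $|\Delta_j|\lesssim\varepsilon_k/N_j$. It combines this with $\bigl\|(e^{Q_{j-1}})'\bigr\|_\infty\lesssim\varepsilon_kN_{j-1}$ and $|\partial_\theta P_a|\le2P_a/(1-\rho)$. The $\rho$-dependence then only multiplies $1/N_j$ and is absorbed by taking $N_0$ large. This leaves an error $O(\varepsilon_k^2/\kappa)$ with an absolute constant.
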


We now compare the second and fourth moments of $e^{Q_j}$ with those of $e^{Q_{j-1}}$. The following lemma is the analytic form of the orthogonality of martingale increments.

The increment $D_j$ lives at frequencies of order at least $N_j$, whereas $e^{Q_{j-1}}$ varies on scales up to $N_{j-1}=N_j/\kappa$, so the two are almost orthogonal. Up to errors of relative size $1/\kappa$, the second moment therefore grows by $\mathbb E|D_j|^2$, and the fourth moment grows by at most a multiple of $\varepsilon_k^2\,\mathbb EX_{j-1}$, as for a martingale with increments of size $\varepsilon_k$.

\begin{lemma}\label{high-frequency-averaging}
Consider the blocks in Lemma~\ref{frequency-selection} with common
amplitude $\varepsilon_k$ and $N_j=\kappa^jN_0$. Put
\[
 X_j=e^{2\Re Q_j},\qquad D_j=e^{Q_j}-e^{Q_{j-1}},
\]
and take expectations with respect to $d\mathbb P=P_a\,dm$.
For each fixed $\rho<1$, choosing $N_0$ sufficiently large gives
\begin{equation}\label{EQ:oscillatory-terms}
 \mathbb E X_j
 =\mathbb E X_{j-1}+\mathbb E|D_j|^2+O(\varepsilon_k^2/\kappa),
\end{equation}
and
\begin{equation}\label{EQ:fourth-increment}
 \mathbb E(X_j^2)
 \le\mathbb E(X_{j-1}^2)+C\varepsilon_k^2\mathbb E X_{j-1},
\end{equation}
uniformly for $|a|\le\rho$ and all $j\ge1$. The same estimates hold with $Q_j$ replaced by $-Q_j$.
\end{lemma}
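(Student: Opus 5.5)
Write $G=e^{Q_{j-1}}$ and $D=D_j=G(e^{z^{N_j}P_j}-1)$, so that $e^{Q_j}=G+D$. Expanding,
\[
 X_j=|G+D|^2=X_{j-1}+2\Re(\conj G D)+|D|^2 .
\]
Hence \eqref{EQ:oscillatory-terms} reduces to showing
\[
 \Bigl|\int_\T \conj G D\,P_a\,dm\Bigr|\lesssim \varepsilon_k^2/\kappa
\]
uniformly for $|a|\le\rho$. The plan is a frequency-separation argument. $D$ vanishes to order $N_j$ at the origin, so $D=z^{N_j}H$ with $H$ analytic and, by \eqref{tail} and the maximum principle on $R_j\D$, $\|H\|_{L^\infty(\cD)}\lesssim\varepsilon_k$. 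On the other hand $\conj G P_a$ is essentially a function whose analytic part has frequencies well below $N_j$.

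To make this precise, I use $P_a=\Re\frac{1+\conj a\zeta}{1-\conj a\zeta}$. Its Fourier coefficients decay like $\rho^{|n|}$, so frequencies beyond $N_j/2$ contribute $O(\rho^{N_j/2})$. This is negligible once $N_0$ is large, given $\rho$.

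For $G$, I write $\int_\T \conj G z^{N_j}H\,P_a\,dm=\langle z^{N_j}H P_a, G\rangle$ and estimate the pairing $\langle z^{N_j} u, G\rangle$ for bounded $u$. By Cauchy--Schwarz this is at most $\|u\|_2$ times the $L^2$ norm of the part of $G$ at frequencies $\ge N_j-N_j/2$.

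The key input is the tail bound $\|G-G_{<N_j/2}\|_2$. I do not need to expand $G$ fully. I use the decomposition
\[
 G=e^{Q_0}+\sum_{\ell<j}D_\ell ,
\]
where each $D_\ell$ is analytic and bounded by $\varepsilon_k$ on $R_\ell\D$. A bounded analytic function on $R_\ell\D$ has Taylor coefficients of size $\varepsilon_k R_\ell^{-n}$. The mass above $n\ge N_j/2$ is therefore $\lesssim\varepsilon_k(N_\ell)^{1/2}e^{-N_j/(2N_\ell)}$, which is exponentially small in $\kappa^{j-\ell}$. For $e^{Q_0}$ I use $R'=2$, and $N_0$ large handles it.

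Summing over $\ell$, the cross term is $O(\varepsilon_k\cdot\varepsilon_k e^{-c\kappa})$. This is $\le\varepsilon_k^2/\kappa$ for $\kappa$ large. Only $\ell=j-1$ matters, and the factor $\varepsilon_k$ from $H$ together with $\varepsilon_k$ from the tail of $D_{j-1}$ gives $\varepsilon_k^2$.

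There is a caveat: the term $e^{Q_0}$ and earlier blocks are not small in amplitude. However, their tails are super-exponentially small, so they are absorbed by taking $N_0$ large depending on $Q_0$. This is where uniformity in $j$ requires care: the sum $\sum_\ell$ is geometric in $\kappa^{j-\ell}$.

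For \eqref{EQ:fourth-increment}, expand
\[
 X_j^2=|G+D|^4=X_{j-1}^2+4X_{j-1}\Re(\conj GD)+\bigl(4(\Re \conj GD)^2+2X_{j-1}|D|^2\bigr)+4|D|^2\Re(\conj GD)+|D|^4 .
\]
The terms containing $|D|^2$ need pointwise control. Here I use the damping: on $\T$, $|D|\le|G|\,|e^{z^{N_j}P_j}-1|\lesssim|G|\,|P_j|\lesssim\varepsilon_k e^{\Re Q_{j-1}}e^{-|\Re Q_{j-1}|}\le\varepsilon_k$. Also $|D|\lesssim\varepsilon_k|G|$, since $|P_j|\le\varepsilon_k$ there.

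With these, all terms quadratic or higher in $D$ are bounded by $C\varepsilon_k^2|G|^2=C\varepsilon_k^2X_{j-1}$, or by smaller quantities. For example, $|D|^4\le\varepsilon_k^4\le\varepsilon_k^2\cdot\varepsilon_k^2$, which is fine if the constant is absorbed using $\mathbb EX_{j-1}\gtrsim1$; alternatively use $|D|^4\lesssim\varepsilon_k^2|D|^2\lesssim\varepsilon_k^4X_{j-1}$.

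The linear term $\mathbb E[X_{j-1}\Re(\conj G D)]=\Re\int |G|^2\conj G D P_a\,dm$ is again an oscillatory pairing. It is handled exactly as above, now with $|G|^2\conj G=G\conj G^2$ in place of $\conj G$. Its frequencies are again concentrated below $N_{j-1}$ up to exponentially small tails in $\kappa$, so it is $O(\varepsilon_k^2 e^{-c\kappa})$. This is absorbed into $C\varepsilon_k^2\mathbb EX_{j-1}$, since $\mathbb E X_{j-1}\ge |G(a)|^2$.

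The $-Q_j$ case is identical: the damping bound is symmetric in the sign of $\Re Q_{j-1}$.

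The main obstacle is making the frequency-tail bound for products such as $G\conj G^2$ uniform in $j$, with constants not depending on the accumulated size of $Q_{j-1}$. I would handle this by bounding these products on the annulus through the uniform $L^\infty$ bounds on $R_{j-1}\D$ given by the maximum-principle step of Lemma~\ref{frequency-selection}.
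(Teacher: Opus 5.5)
Your frequency-separation argument is a genuinely different route from the paper's. The paper integrates by parts in $\theta$ against the radial primitive $\Delta_j(z)=\int_0^1D_j(rz)\,dr/r$, with $|\Delta_j|\lesssim\varepsilon_k/N_j$. It then bounds $\partial_\theta\bigl(\overline{e^{Q_{j-1}}}P_a\bigr)$ through $\|(e^{Q_{j-1}})'\|_\infty\lesssim\varepsilon_kN_{j-1}$ and $|\partial_\theta P_a|\le2P_a/(1-\rho)$. As written, your route does not give the required uniformity.

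A minor slip first. Your tail bound $\varepsilon_k\sqrt{N_\ell}\,e^{-N_j/(2N_\ell)}$ is not small for $\ell=j-1$, since $\sqrt{N_{j-1}}\,e^{-\kappa/2}$ is unbounded in $j$. Use Parseval on $R_\ell\T$ instead: $\|(D_\ell)_{\ge N}\|_{L^2(\T)}\le R_\ell^{-N}\|D_\ell\|_{L^\infty(R_\ell\D)}$.

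The real problem is the factor $\|u\|_2$ with $u=H\,(P_a)_{\ge-N_j/2}$. In general this is only $\lesssim\varepsilon_k\|P_a\|_2\asymp\varepsilon_k(1-|a|)^{-1/2}$, so your cross term is $\lesssim\varepsilon_k^2e^{-c\kappa}(1-\rho)^{-1/2}$. Its dominant part ($\ell=j-1$) depends only on the fixed ratio $N_j/N_{j-1}=\kappa$, so no choice of $N_0$ removes the $\rho$-dependence. You would need $\kappa\gtrsim\log\frac1{1-\rho}$. That is not allowed: $\kappa$ is absolute in Proposition~\ref{PROP:blocks}, while the lemma is applied with $\rho=\rho_k\to1$. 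In the paper, the price $1/(1-\rho)$ of differentiating $P_a$ multiplies $\|e^{Q_{j-1}}\|_\infty\|\Delta_j\|_\infty\lesssim(\|e^{Q_0}\|_\infty+j\varepsilon_k)\,\varepsilon_k/N_j$, which $N_0$ does absorb. Within your framework you can repair this as follows. Write $P_a=|k_a|^2$ with $k_a(\zeta)=\sqrt{1-|a|^2}/(1-\overline a\zeta)$. Pair $D_jk_a$, whose $L^2$ norm is $(\mathbb E|D_j|^2)^{1/2}\lesssim\varepsilon_k$, against the tail of $e^{Q_{j-1}}k_a$, estimated on the circles $R_\ell\T$ once $R_\ell-1\ll1-\rho$.

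Your plan for the linear term $4\,\mathbb E\bigl(X_{j-1}\Re(\overline{e^{Q_{j-1}}}D_j)\bigr)$ in \eqref{EQ:fourth-increment} does not close. The maximum-principle step of Lemma~\ref{frequency-selection} bounds the increments $z^{N_\ell}P_\ell$ and $D_\ell$, not $e^{Q_{j-1}}$ itself. For the latter one only has $\|e^{Q_{j-1}}\|_\infty\lesssim\|e^{Q_0}\|_\infty+j\varepsilon_k$, and it is unbounded in $j$, since it dominates $(\mathbb EX_{j-1})^{1/2}$. Sup-norm tail bounds for $e^{2Q_{j-1}}\overline{e^{Q_{j-1}}}$ therefore produce powers of $\|e^{Q_{j-1}}\|_\infty$, and nothing controls these by $\mathbb EX_{j-1}$ uniformly in $j$. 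The lower bound $\mathbb EX_{j-1}\ge|e^{Q_{j-1}(a)}|^2$ does not help either. It depends on $\|\Re Q_0\|_\infty$ and $\rho$, whereas $C$ must be absolute for Step~2 of the proposition and for a uniform $\gamma$ in the criterion.

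On $\{\Re Q_{j-1}>0\}$ the integrand has size $\varepsilon_k|e^{Q_{j-1}}|^3$, so a weighted gain of order $\varepsilon_k/|e^{Q_{j-1}}|$ is needed. The missing observation is that the derivative of $e^{Q_{j-1}}$ stays uniformly controlled although $e^{Q_{j-1}}$ does not. Summing the derivative bounds in \eqref{tail} gives $\|(e^{Q_{j-1}})'\|_\infty\lesssim\varepsilon_kN_{j-1}$. Hence $|\partial_\theta(X_{j-1}\overline{e^{Q_{j-1}}})|\le3X_{j-1}|(e^{Q_{j-1}})'|\lesssim\varepsilon_kN_{j-1}X_{j-1}$ pointwise on $\T$. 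Integrating by parts against $\Delta_j$ then gives $\frac{\varepsilon_k^2}{\kappa}\mathbb EX_{j-1}$. The $\partial_\theta P_a$ contribution, $\frac{\varepsilon_k}{N_j}\frac{\|e^{Q_{j-1}}\|_\infty}{1-\rho}\,\mathbb EX_{j-1}$, is absorbed by taking $N_0$ large. Your treatment of the terms quadratic and higher in $D_j$ is fine and agrees with the paper.
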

\begin{proof}
Since $e^{Q_j}=e^{Q_{j-1}}+D_j$, we have on $\T$
\[
 \begin{aligned}
 X_j-X_{j-1}&=|D_j|^2+2\Re\bigl(\overline{e^{Q_{j-1}}}D_j\bigr),\\
 X_j^2-X_{j-1}^2&=4X_{j-1}\Re\bigl(\overline{e^{Q_{j-1}}}D_j\bigr)+O\bigl(\varepsilon_k^2X_{j-1}\bigr),
 \end{aligned}
\]
where the second identity follows by expanding $|e^{Q_{j-1}}+D_j|^4$ and using $|D_j|\le|e^{Q_{j-1}}|$ and $|D_j|\lesssim\varepsilon_k$ on $\T$. Both claims therefore reduce to estimates of the mixed terms, namely
\begin{equation}\label{EQ:mixed}
 \bigl|\mathbb E\bigl(\overline{e^{Q_{j-1}}}D_j\bigr)\bigr|\lesssim\frac{\varepsilon_k^2}{\kappa},
 \qquad
 \bigl|\mathbb E\bigl(X_{j-1}\overline{e^{Q_{j-1}}}D_j\bigr)\bigr|\lesssim\frac{\varepsilon_k^2}{\kappa}\,\mathbb EX_{j-1}.
\end{equation}

These terms are small because $D_j$ oscillates much faster than $e^{Q_{j-1}}$, and we exploit this by integrating by parts in the angular variable. The function
\[
 \Delta_j(z)=\int_0^1D_j(rz)\,\frac{dr}{r}
\]
is analytic in $\D$ and satisfies the identity $\partial_\theta\Delta_j(e^{i\theta})=iD_j(e^{i\theta})$. Since $D_j$ vanishes to order $N_j$ at the origin, the Schwarz lemma gives $|\Delta_j|\lesssim\varepsilon_k\int_0^1r^{N_j-1}\,dr=\varepsilon_k/N_j$. Integrating by parts, we obtain
\[
 \mathbb E\bigl(\overline{e^{Q_{j-1}}}D_j\bigr)
 =i\int_\T\Delta_j\,\partial_\theta\bigl(\overline{e^{Q_{j-1}}}P_a\bigr)\,dm.
\]
Since $|\partial_\theta P_a|\le2P_a/(1-\rho)$ for $|a|\le\rho$, this yields
\[
 \bigl|\mathbb E\bigl(\overline{e^{Q_{j-1}}}D_j\bigr)\bigr|
 \lesssim\frac{\varepsilon_k}{N_j}\Bigl(\bigl\|(e^{Q_{j-1}})'\bigr\|_\infty+\frac{\|e^{Q_{j-1}}\|_\infty}{1-\rho}\Bigr).
\]
It thus remains to estimate norms inside the bracket. Summing the bounds of Lemma~\ref{frequency-selection} over the earlier insertions gives
\[
 \bigl\|(e^{Q_{j-1}})'\bigr\|_\infty\lesssim\bigl\|(e^{Q_0})'\bigr\|_\infty+\varepsilon_kN_{j-1},
 \qquad
 \|e^{Q_{j-1}}\|_\infty\lesssim\|e^{Q_0}\|_\infty+j\varepsilon_k.
\]
Since $N_{j-1}=\kappa^{j-1}N_0$ grows geometrically in $j$, the terms involving $Q_0$, $\rho$ and $j$ are at most $\varepsilon_kN_{j-1}$ once $N_0$ is sufficiently large. The bracket is therefore $\lesssim\varepsilon_kN_{j-1}$, and the first estimate in \eqref{EQ:mixed} follows from $N_{j-1}/N_j=1/\kappa$. The same argument gives
\begin{equation}\label{EQ:block-derivative}
 \norm{(e^{\pm Q_j})'}_\infty\lesssim\varepsilon_kN_j.
\end{equation}
For the second estimate in \eqref{EQ:mixed}, we integrate by parts in the same way, now with the additional factor $X_{j-1}=|e^{Q_{j-1}}|^2$. Its angular derivative is at most $2|e^{Q_{j-1}}|\,|(e^{Q_{j-1}})'|$, so the same bound holds with the additional factor $\mathbb EX_{j-1}$ on the right-hand side.
\end{proof}
Now turn to the proof of the principal result in this subsection.
\begin{proof}[Proof of Proposition~\ref{PROP:blocks}]
Choose $N_0$ large enough for Lemma~\ref{high-frequency-averaging}, and consider the blocks
in Lemma~\ref{frequency-selection} with amplitude $\varepsilon_k$
and $N_j=\kappa^jN_0$. All estimates are uniform for $|a|\le\rho$.

\proofpart{1}{The effect of a single insertion:}
By Lemma~\ref{high-frequency-averaging},
\[
 \int_\T e^{2\Re Q_j}P_a\,dm
 -\int_\T e^{2\Re Q_{j-1}}P_a\,dm
 =\int_\T|D_j|^2P_a\,dm+O(\varepsilon_k^2/\kappa).
\]
To estimate the integral on the right, use
$|P_j|\le\varepsilon_k\le1/100$ and the damping property:
\[
 |D_j|^2=e^{2\Re Q_{j-1}}\bigl|e^{\zeta^{N_j}P_j}-1\bigr|^2\asymp e^{2\Re Q_{j-1}}|P_j|^2\asymp\varepsilon_k^2e^{2\Re Q_{j-1}-2|\Re Q_{j-1}|},\qquad\zeta\in\T.
\]
Substituting this comparison into the preceding identity, and
applying the same argument to $-Q_j$, we obtain
\begin{equation}\label{EQ:smalldiff}
 \begin{aligned}
 \int_\T\bigl(e^{\pm2\Re Q_j}-e^{\pm2\Re Q_{j-1}}\bigr)P_a\,dm&\le C\varepsilon_k^2,\\
 \int_\T\bigl(e^{\pm2\Re Q_j}-e^{\pm2\Re Q_{j-1}}\bigr)P_a\,dm&\ge\varepsilon_k^2\Bigl(c\int_\T e^{\pm2\Re Q_{j-1}-2|\Re Q_{j-1}|}P_a\,dm-\frac C\kappa\Bigr).
 \end{aligned}
\end{equation}
Here the upper bound uses $e^{\pm2u-2|u|}\le1$.
The fourth-moment estimate is \eqref{EQ:fourth-increment}.

\proofpart{2}{The upper moment estimates:}
Put $A_j=j\varepsilon_k^2$. For the second moments, iterating
\eqref{EQ:smalldiff} gives
\[
 \int_\T e^{\pm2\Re Q_j}P_a\,dm
 \le\int_\T e^{\pm2\Re Q_0}P_a\,dm+CA_j.
\]
For the fourth moments, we iterate \eqref{EQ:fourth-increment}
and use the preceding bound:
\[
 \int_\T e^{\pm4\Re Q_j}P_a\,dm
 \le\int_\T e^{\pm4\Re Q_0}P_a\,dm
       +CA_j\int_\T e^{\pm2\Re Q_0}P_a\,dm+CA_j^2.
\]
The initial polynomial is fixed. Thus, for all sufficiently large $j$,
\begin{equation}\label{EQ:mom2est}
 \int_\T e^{\pm2\Re Q_j}P_a\,dm\lesssim A_j,
 \qquad
 \int_\T e^{\pm4\Re Q_j}P_a\,dm\lesssim A_j^2,
\end{equation}
with absolute comparison constants.

\proofpart{3}{Growth of the logarithmic mean:}
This step shows that $\Re Q_j$ spreads out, in the sense that $|\Re Q_j|$ is typically of size $\log A_j$. Since
\[
 e^{2u-2|u|}+e^{-2u-2|u|}=1+e^{-4|u|}\ge1,
 \qquad u\in\mathbb R,
\]
the two lower bounds in \eqref{EQ:smalldiff}, with
$X_j=e^{2\Re Q_j}$ as in Lemma~\ref{high-frequency-averaging}, give
\[
 \mathbb E(X_j+X_j^{-1})-\mathbb E(X_{j-1}+X_{j-1}^{-1})
 \ge\left(c-\frac C\kappa\right)\varepsilon_k^2
 \gtrsim\varepsilon_k^2,
\]
where the last inequality follows by choosing $\kappa$ sufficiently large.
Summing from the first insertion to the $j$th, and using
$A_j=j\varepsilon_k^2$, yields
\[
 \int_\T\bigl(e^{2\Re Q_j}+e^{-2\Re Q_j}\bigr)P_a\,dm
 \ge\mathbb E(X_0+X_0^{-1})+cj\varepsilon_k^2
 \gtrsim A_j.
\]
Applying Paley--Zygmund as in Subsection~\ref{SEC:GUIDE}, the
fourth-moment bound in \eqref{EQ:mom2est} therefore gives
\[
 \int_{\{|\Re Q_j|\ge c\log A_j\}}P_a\,dm\gtrsim1
\]
for all sufficiently large $j$. Here we used
$e^{2u}+e^{-2u}\le2e^{2|u|}$. Consequently,
\begin{equation}\label{logarithmic-mean}
 \int_\T|\Re Q_j|P_a\,dm\gtrsim\log A_j.
\end{equation}

\proofpart{4}{Positive real parts on a set of positive harmonic measure:}
The point of this step is that $\Re Q_j$ cannot be large only in absolute value. Its mean against $P_a\,dm$ is the bounded number $\Re Q_j(a)$, so large negative values must be balanced by large positive ones, and the bound on $e^{2\Re Q_j}$ in \eqref{EQ:mom2est} prevents the positive values from concentrating on a small set. The signed mean remains bounded independently of the block length:
\begin{equation}\label{EQ:block-interior}
 \begin{aligned}
 \left|\int_\T\Re Q_j\,P_a\,dm\right|
 &=|\Re Q_j(a)|\\
 &\le\norm{\Re Q_0}_\infty
       +\varepsilon_k\sum_{i=1}^j\rho^{N_i}
 \le\norm{\Re Q_0}_\infty+\frac{\varepsilon_k}{1-\rho}.
 \end{aligned}
\end{equation}
Combining this with \eqref{logarithmic-mean} gives, for all sufficiently large $j$,
\[
 2\int_\T(\Re Q_j)_+P_a\,dm
 =\int_\T|\Re Q_j|P_a\,dm+\Re Q_j(a)
 \gtrsim\log A_j.
\]
Set $E_j=\{\Re Q_j>0\}$ and $\omega_j=\int_{E_j}P_a\,dm$.
Jensen's inequality and \eqref{EQ:mom2est} give
\[
 \begin{aligned}
 2\int_\T(\Re Q_j)_+P_a\,dm
 &\le\omega_j\log\left(
       \frac1{\omega_j}\int_{E_j}e^{2\Re Q_j}P_a\,dm\right)\\
 &\le\omega_j\log(CA_j)+\frac1e.
 \end{aligned}
\]
Comparing the two bounds yields $\omega_j\gtrsim1$. More precisely, $\omega_j\ge\delta_0$ for all sufficiently large $j$, where $\delta_0>0$ depends only on the absolute constants in \eqref{EQ:mom2est} and \eqref{logarithmic-mean}. These constants do not depend on $\kappa$, once $\kappa$ exceeds the threshold $\kappa_1$ required in Step~3.
Since $e^{2\Re Q_j-2|\Re Q_j|}=1$ on $E_j$,
\eqref{EQ:smalldiff} now gives
\[
 \int_\T\bigl(e^{2\Re Q_{j+1}}-e^{2\Re Q_j}\bigr)P_a\,dm
 \ge c\varepsilon_k^2-\frac{C\varepsilon_k^2}{\kappa}
 \ge\frac c2\varepsilon_k^2,
\]
where $c$ now contains the factor $\delta_0$. Since $\delta_0$ does not depend on $\kappa\ge\kappa_1$, we may now fix $\kappa\ge\kappa_1$, large enough for this inequality and for Lemma~\ref{frequency-selection}.
Summing over the last half of a sufficiently long block proves
the lower bound in \eqref{block-proposition-moments}.
The upper bound follows from Step~2.
\end{proof}

\section{Proof of the main result}\label{SEC:main}

\subsection{The order of the parameters}
We construct odd polynomials $f_k$ and radii $r_k\uparrow1$ inductively.
The frequency ratio $\kappa$ is the absolute odd integer of Proposition~\ref{PROP:blocks}, fixed once and for all.
The amplitudes $\varepsilon_k$, which control the exponential increments, are also fixed in advance. We only use that they decrease to zero and satisfy the doubling condition $\varepsilon_{k-1}\le2\varepsilon_k$, so any such sequence will do.
At stage $k$, with $f_{k-1}$ and $r_{k-1}$ already constructed, the remaining parameters are chosen in the order
\[
 \rho_k\;\longrightarrow\;N_0\;\longrightarrow\;M_k\;\longrightarrow\;r_k .
\]

The radius $\rho_k=2r_{k-1}/(1+r_{k-1})$ depends only on $r_{k-1}$ and determines the points $|a|\le\rho_k$ at which the moment estimates are needed.
The initial frequency $N_0$ depends on $f_{k-1}$, $r_{k-1}$, $\varepsilon_k$ and $\rho_k$; it keeps the new block small on the preceding disc $|z|\le r_{k-1}$.
The block length $M_k$ depends on all of the above, and produces the required moments.
Finally, $r_k=1-c/N_{M_k}$ is determined by the last frequency of the block.

\subsection{Proof of the main result}
\begin{proof}[Proof of Theorem~\ref{main:THM}]
\proofpart{1}{The inductive procedure:}
Fix amplitudes $0<\varepsilon_k\le1/100$, decreasing to zero, with $\varepsilon_{k-1}\le2\varepsilon_k$ for $k\ge2$, and put $f_0=0$ and $r_0=\tfrac12$.
Suppose $f_{k-1}$ and $r_{k-1}$ have been constructed, and put
\[
 \rho_k=\frac{2r_{k-1}}{1+r_{k-1}}.
\]
Apply Proposition~\ref{PROP:blocks} with initial polynomial
$Q_0=f_{k-1}$, amplitude $\varepsilon_k$, and $\rho=\rho_k$.
Choose an admissible odd initial frequency $N_0$, at least as large
as the last frequency of the preceding block, so that \eqref{tail} gives
\[
 \sup_{|z|\le r_{k-1}}
 |e^{\pm Q_j(z)}-e^{\pm Q_{j-1}(z)}|
 \le C\varepsilon_k r_{k-1}^{N_j}
 \le2^{-k-j},\qquad j\ge1.
\]
One choice of $N_0$ suffices, since $N_j=\kappa^jN_0\ge(j+1)N_0$.

For a block length $M_k$, still to be fixed, set
\[
 f_k(z)=Q_{M_k}(z)
     =f_{k-1}(z)+\sum_{j=1}^{M_k}z^{N_j}P_j(z),
 \qquad A_k=M_k\varepsilon_k^2.
\]
The frequencies are odd and the damping polynomials are even,
so $f_k$ remains odd. For all sufficiently large $M_k$,
Proposition~\ref{PROP:blocks} gives
\begin{equation}\label{block-moments}
 \int_\T e^{2\Re f_k}P_a\,dm\gtrsim A_k,
 \qquad
 \int_\T e^{4\Re f_k}P_a\,dm\lesssim A_k^2,
 \qquad |a|\le\rho_k.
\end{equation}
Our choice of $N_0$ also gives
\begin{equation}\label{preserve-disc}
 \sup_{|z|\le r_{k-1}}
 |e^{\pm f_k(z)}-e^{\pm f_{k-1}(z)}|
 \le\sum_{j=1}^{M_k}2^{-k-j}\le2^{-k}.
\end{equation}

To locate the new circle, use \eqref{EQ:block-derivative} and put
\[
 r_k=1-\frac{c}{N_{M_k}},
\]
where $c>0$ is a sufficiently small absolute constant. For $\zeta\in\T$, integrating along the radius and applying \eqref{EQ:block-derivative} to $f_k=Q_{M_k}$ gives
\[
 |e^{f_k(r_k\zeta)}-e^{f_k(\zeta)}|
 =\Bigl|\int_{r_k}^1(e^{f_k})'(s\zeta)\,\zeta\,ds\Bigr|
 \le(1-r_k)\norm{(e^{f_k})'}_\infty
 \lesssim\frac{c}{N_{M_k}}\,\varepsilon_kN_{M_k}
 =c\,\varepsilon_k,
\]
so our choice of $c$ ensures
\begin{equation}\label{circle-approximation}
 \sup_{\zeta\in\T}|e^{f_k(r_k\zeta)}-e^{f_k(\zeta)}|
 \le\tfrac12.
\end{equation}
Since $N_{M_k}=\kappa^{M_k}N_0$ and $N_0$ is fixed, taking $M_k$
sufficiently large also gives
\begin{equation}\label{choice-scales}
 \log\frac e{1-r_k}
 =\log\frac{eN_0}{c}+M_k\log \kappa
 \le(1+\log \kappa)M_k.
\end{equation}
We now fix $M_k$ large enough for these estimates and for
$r_k>(1+r_{k-1})/2$, choosing the block lengths so that
$A_k\to\infty$. This completes the inductive step and ensures
$r_k\uparrow1$.

\proofpart{2}{Passing to the limit:}
Fix $m>k$, and let $j$ run through the successive terms in blocks
$k+1,\ldots,m$. Summing \eqref{tail} over $j$ and using the
lacunarity of $N_j$ gives
\[
 \begin{aligned}
 (1-|z|)\bigl|(e^{\pm f_m}-e^{\pm f_k})'(z)\bigr|
 &\lesssim\varepsilon_{k+1}(1-|z|)
       \sum_j N_j|z|^{N_j-1}\\
 &\lesssim\varepsilon_{k+1}(1-|z|)
       \sum_{n=0}^\infty|z|^n
 =\varepsilon_{k+1}.
 \end{aligned}
\]
The comparison uses the disjoint ranges $N_j/2\le n<N_j$,
where $n\asymp N_j$.
All differences vanish at the origin, so
\[
 \norm{e^{\pm f_m}-e^{\pm f_k}}_\B
 \lesssim\varepsilon_{k+1}\longrightarrow0.
\]
Each exponential extends across $\T$. Thus there are $F,G\in\B_0$ with
\[
 e^{f_k}\longrightarrow F,
 \qquad e^{-f_k}\longrightarrow G
 \quad\hbox{in Bloch norm}.
\]
The convergence is locally uniform, and passing to the limit in the
identities for the approximants gives
\[
 F(z)G(z)=1,\qquad F(-z)=G(z)=F(z)^{-1}.
\]
In particular, $F$ is zero-free. Its analytic logarithm normalized to
vanish at the origin is odd and is the locally uniform limit of $f_k$.

\proofpart{3}{Moment estimates and visibility:}
By \eqref{preserve-disc},
\[
 \sup_{|z|\le r_k}|F(z)-e^{f_k(z)}|
 \le\sum_{\ell=k+1}^\infty2^{-\ell}=2^{-k}.
\]
Together with \eqref{circle-approximation}, this gives
\[
 \sup_{\zeta\in\T}|F(r_k\zeta)-e^{f_k(\zeta)}|
 \le2^{-k}+\tfrac12\le1.
\]
Since $r_k>(1+r_{k-1})/2$, we have $|z|/r_k\le\rho_k$ for
$|z|\le r_{k-1}$. The triangle inequality in $L^2$ and $L^4$,
applied to the probability measure $P_{z/r_k}\,dm$, therefore
transfers \eqref{block-moments} to $F$:
\begin{equation}\label{final-moments}
 \begin{aligned}
 \int_\T|F(r_k\zeta)|^2P_{z/r_k}(\zeta)\,dm(\zeta)&\gtrsim A_k,\\
 \int_\T|F(r_k\zeta)|^4P_{z/r_k}(\zeta)\,dm(\zeta)&\lesssim A_k^2,
 \qquad |z|\le r_{k-1},
 \end{aligned}
\end{equation}
for all sufficiently large $k$.
The Paley--Zygmund argument in Subsection~\ref{SEC:GUIDE} gives thresholds $L_k\asymp\sqrt{A_k}$ and an absolute constant $\gamma>0$ such that
\[
 \omega\left(\frac z{r_k},\D,
 \{\zeta\in\T:|F(r_k\zeta)|\ge L_k\}\right)
 \ge\gamma,\qquad |z|\le r_{k-1}.
\]
Since the block lengths were chosen so that $A_k\to\infty$, we have $L_k\to\infty$, and since $A_k=M_k\varepsilon_k^2$, \eqref{choice-scales} gives the remaining comparison in the criterion,
\[
 \varepsilon_k\sqrt{\log\frac e{1-r_k}}\lesssim\varepsilon_k\sqrt{M_k}\lesssim L_k.
\]
By the choice of the amplitudes in Step~1, the remaining hypotheses on $\varepsilon_k$ in Theorem~\ref{THM:SUFFNONCYCLIC} hold as well.
Theorem~\ref{THM:SUFFNONCYCLIC} now yields
$\operatorname{ind}[F,F^{-1}]=2$, proving Theorem~\ref{main:THM}.
\end{proof}

\begin{remark}[Weighted little Bloch spaces]\label{REM:weighted}
The proof of Theorem~\ref{main:THM} carries over to $\B_0(w)$ whenever $\Phi_w(r)\to\infty$, and the criterion already covers this setting, see Section~\ref{SEC:general}. In the construction, the inserted term at frequency $N_j$ in the $k$th block is given the amplitude $\varepsilon_k\,w(1/N_j)$. Then \eqref{block-moments} holds with
\[
 A_k=\varepsilon_k^2\sum_{j=1}^{M_k}w(1/N_j)^2
\]
in place of $A_k=M_k\varepsilon_k^2$. Since $N_j=\kappa^jN_0$ grows geometrically, this sum is comparable to a piece of the integral defining $\Phi_w$,
\[
 \sum_{j=1}^{M_k}w(1/N_j)^2\asymp\int_{1/N_{M_k}}^{1/N_0}w(t)^2\,\frac{dt}{t}.
\]
Hence long blocks give $A_k\to\infty$, precisely because $\Phi_w(r)\to\infty$.

The assumption that $w(t)/t^{1-\delta}$ is nonincreasing makes the weights at different scales comparable, which allows the estimates to be moved from one scale to another. The remaining arguments need only notational changes, with $\Phi_w(r_k)$ in place of $\log\frac e{1-r_k}$.

\end{remark}

\bibliographystyle{siam}
\bibliography{Figures/references}

\end{document}